\documentclass[11pt]{amsart}
\usepackage{graphicx}
\usepackage{amscd}
\usepackage{amsmath}
\usepackage{amsfonts}
\usepackage{amssymb}
\usepackage{setspace}
\setstretch{1.2}
\usepackage{enumerate}         
\usepackage{color}
\usepackage{url}
\usepackage{amsthm}
\usepackage{hyperref}
\usepackage{bm}
\usepackage{xy}
\usepackage{color}
\usepackage{subfig}
\usepackage{bbm}
\usepackage{etoolbox}
\allowdisplaybreaks[4]

\usepackage{natbib}

\usepackage{geometry}
\geometry{a4paper,hcentering,vcentering,outer=2cm,top=2.5cm}

\usepackage{verbatim}

\usepackage{tikz}
\usetikzlibrary{positioning,shapes,shadows,arrows}
\tikzstyle{abstract}=[rectangle, draw=black, rounded corners, fill=blue!40, drop shadow,
        text centered, anchor=north, text=white, text width=2cm]
\tikzstyle{comment}=[rectangle, draw=black, rounded corners, fill=green, drop shadow,
        text centered, anchor=north, text=white, text width=2cm]
\tikzstyle{myarrow}=[->, >=open triangle 90, thick]
\tikzstyle{line}=[-, thick]

\theoremstyle{plain}
\newtheorem{theorem}{Theorem}[section]

\newtheorem{lemma}[theorem]{Lemma}

\newtheorem{proposition}[theorem]{Proposition}

\newtheorem{assumption}[theorem]{Assumption}

\theoremstyle{remark}
\newtheorem{remark}[theorem]{Remark}

\theoremstyle{hp}

\numberwithin{equation}{section}


\newcommand{\rsto}{]\!\kern-1.8pt ]}
\newcommand{\lsto}{[\!\kern-1.7pt [}

\vfuzz2pt 
\hfuzz2pt 

\numberwithin{equation}{section}

\newcommand{\FF}{\mathbb{F}}

\newcommand{\HH}{\mathbb{H}}
\newcommand{\RR}{\mathbb{R}}

\newcommand{\PP}{\mathbb{P}}
\newcommand{\NN}{\mathbb{N}}

\newcommand{\cF}{\mathcal{F}}

\newcommand{\cL}{\mathcal{L}}


\newcommand{\Ex}[2]{\mathbb{E}^{#1}\left[#2\right]}                     
\newcommand{\Excond}[3]{\mathbb{E}^{#1}\left[\left.#2\right|#3\right]}  







\renewcommand{\cite}{\citet}

\makeatletter
\patchcmd{\@maketitle}
  {\ifx\@empty\@dedicatory}
  {\ifx\@empty\@date \else {\vskip3ex \centering\footnotesize\@date\par\vskip1ex}\fi
   \ifx\@empty\@dedicatory}
  {}{}
\patchcmd{\@adminfootnotes}
  {\ifx\@empty\@date\else \@footnotetext{\@setdate}\fi}
  {}{}{}
\newcommand{\subjclassname@JEL}{}
\makeatother

\begin{document}

\title[Quantization-based Scheme for FBSDEs]{A Fully Quantization-based Scheme for FBSDEs
}

\author{Giorgia Callegaro}
\address[Giorgia Callegaro]{University of Padova, Department of Mathematics, \newline
\indent via Trieste 63, 35121 Padova, Italy}
\email[Giorgia Callegaro]{gcallega@math.unipd.it}

\author{Alessandro Gnoatto}
\address[Alessandro Gnoatto]{University of Verona, Department of Economics, \newline
\indent via Cantarane 24, 37129 Verona, Italy}
\email[Alessandro Gnoatto]{alessandro.gnoatto@univr.it}

\author{Martino Grasselli}
\address[Martino Grasselli]{University of Padova, Department of Mathematics, \newline
\indent via Trieste 63, 35121 Padova, Italy\newline
\indent and\newline
\indent Devinci Research Center, L\'{e}onard de Vinci P{\^ o}le Universitaire, \newline
\indent 92916 Paris La D\'{e}fense, France}
\email[Martino Grasselli]{grassell@math.unipd.it}

\begin{abstract}
We propose a quantization-based numerical scheme for a family of decoupled FBSDEs. We simplify the scheme for the control in \cite{ps2018} so that our approach is fully based on recursive marginal quantization and does not involve  any  Monte Carlo simulation for the computation of conditional expectations. We analyse in detail the numerical error of our scheme and we show through some examples the performance of the whole procedure, which proves to be very effective in view of financial applications. 
	
\end{abstract}

\keywords{FBSDEs, Quantization, Numerical Scheme}
\thanks{{\em Acknowledgements.} The authors are grateful to Blanka Horvath for fruitful discussions. }
\subjclass[2010]{65C30, 65C40, 60H20.  \textit{JEL Classification} C02, C63}

\date{\today}

\maketitle

\section{Introduction and Motivation}\label{intro}

In this paper we introduce an efficient scheme for the numerical approximation of the solution $(Y,U,V)$ of a family of   Forward-Backward Stochastic Differential Equations (FBSDEs hereafter) 
\begin{equation}
	\label{eq:fbsde}
	\left\{
	\begin{array}{rcl}
		Y _ { t } &= &y _ { 0 } + \int _ { 0 } ^ { t } b \left( Y _ { s } \right) d s + \int _ { 0 } ^ { t } \sigma \left( Y _ { s } \right)^\top d W _ { s } , \quad y _ { 0 } \in \mathbb { R } ^ { d }\\
		U_{t}&=&\xi+\int_{t}^{T} f\left(s, Y_{s}, U_{s}, V_{s}\right) d s-\int_{t}^{T} V_{s}^\top d W_{s}, \quad t \in[0, T],
	\end{array}
	\right.
\end{equation}
where $W$ is a Brownian motion, $T>0$ is a deterministic terminal time and the functions $b,\sigma, f,\xi$ satisfy some conditions specified in the sequel in order to grant that the solution of \eqref{eq:fbsde} is well defined.  FBSDEs of the form \eqref{eq:fbsde} are particularly popular in financial mathematics: in typical applications the (forward) process $Y$ describes the evolution of a financial asset, while the (backward) SDE for $U$ is related to the value of 
the portfolio that hedges the terminal payoff $\xi$ through the trading strategy $V$. BSDEs allow for the treatment of non-linear pricing problems and this originated their popularity in finance. More recently, in the aftermath of the 2007-2009 financial crisis, the valuation of financial products has been revisited in several aspects, often by means of advanced BSDEs treatments. The possibility of a default of both agents involved in the transaction and the presence of multiple sources of funding are represented at the level of valuation equations by introducing typically non-linear FBSDEs for value adjustments (xVA), see e.g. \cite{capponi18}. Value adjustments are further terms are to be added or subtracted to an idealized reference price (computed in the absence of the afore-mentioned frictions), in order to obtain the final value of the transaction. For example, the computation of Credit Value Adjustment (CVA) requires the knowledge, at each time $s\in[t,T]$ (between today, $t$, and the maturity $T$) of the future probability distribution of the contingent claim. The numerical cost of such computations becomes even more pronounced when considering the whole portfolio of claims between the bank and the counterparty.

The history of BSDEs goes back to  \cite{bismut1973} and originates from the theory of stochastic optimal control.
First existence and uniqueness results have been obtained in the seminal paper of \cite{pp1990} and have been further extended in several directions, including the presence  of jumps, see \cite{Tang1994} and reflection \cite{elkaroui1997}, \cite{cvitanic1996}, \cite{bc08}, \cite{cass09}, \cite{casselk11}. 
Applications in mathematical finance are abundant. We refer the reader to \cite{ekpq1997}, \cite{gobetlw05} and the surveys in \cite{crepeyBook14}, \cite{crepey13} for numerous references and several applications in finance, both in complete and incomplete markets. 
In view of applications, an important issue concerns the approximation of the solution of a BSDE: the most relevant contribution is based on the dynamic programming approach, introduced by  \cite{brianddm02} in a Markovian setting. In this  case, the rate of convergence for deterministic time discretization has been
studied by \cite{Zhang04}, who transformed
the problem to computing a sequence of conditional expectations. This opened the door to several approaches to attack the problem, as  significant progress has been made in computing the conditional expectations: \cite{bt2004} adopted the Malliavin calculus approach, while 
\cite{gobetlw05} proposed the linear regression method based on  the Least-Squares Monte Carlo approach in \cite{longstaffs01}.
The approach of  \cite{bp2003} and \cite{dbpp2003} was based on quantization, a technique that will be treated  in the sequel as it represents  the main source of inspiration for our work. 
Since then, the literature on BSDE flourished and attained high level of generality, including the non Markovian setting. In the  case where the terminal condition  is not necessarily Markovian,  \cite{brianl14} proposed a forward scheme based on Wiener chaos expansion, for which  conditional expectations can be efficiently 
computed through the chaos decomposition formula. 

The problem of finding numerical approximations for the solution of (coupled) FBSDEs is difficult and requires additional care.  The first relevant result is due to \cite{dmp1996} and is based on the four step scheme of \cite{mpj1994}. Later, several papers were devoted to the numerical approximation of FBSDEs with reflection, such as \cite{be08}, \cite{crepeymatoussi08} and equations of  McKean-Vlasov type, see e.g. \cite{casscd19} for a numerical method based on the Picard iteration, where the motivation comes from the theory of mean field games. 
Despite the charm and mathematical beauty of these very general frameworks, what is interesting in view of  financial  applications to pricing and hedging is the case of  decoupled FBSDEs like \eqref{eq:fbsde} (that is, when the forward SDE for $Y$ does not exhibit a dependence on $U$) in a Markovian setting. In this apparently simpler setting, many challenges still remain. First,   the curse of dimensionality, namely the problem becomes immediately untractable for dimensions greater than one. Moreover, even  in the one dimensional case, the numerical procedures described above require a lot of computations together with  Monte Carlo simulations, which leads to algorithms that are too time consuming in view of concrete applications. 
Needless to say, any improvement in the efficiency of the procedure represents an extremely useful result in terms of computational time if we have in  mind portfolios that include thousands of positions.

The aim of our study is to provide a new numerical scheme for the solution of FBSDEs that allows to improve the approximation of the solution of \eqref{eq:fbsde}. We follow the spirit of \cite{bp2003} and \cite{dbpp2003}, where Pag\`es and coauthors applied the optimal quantization technique to compute the conditional expectations. We extend their approach by considering an algorithm that is  entirely based on fast quantization: in particular, our procedure does not rely on  Monte Carlo simulation in any step of the algorithm.
\\

We now give a brief picture on quantization, that can be seen as a compression technique for random variables and stochastic processes and has been widely used in many fields, including  information theory, cluster analysis, pattern recognition.
We refer to \cite{gray98} for the history of the first fifty years of quantization and to \cite{pages15} for a more recent survey focusing on  numerical probability. 
Quantization of random vectors provides the best possible discrete approximation to the original distribution, according to a distance that is commonly measured using the squared Euclidean norm.
Many numerical procedures have been developed to obtain optimal quadratic quantizers of the Gaussian
(and even non-Gaussian) distribution in high dimension, mostly
based on stochastic optimization algorithms, see  \cite{pages15} and references therein. While theoretically sound and deeply investigated, optimal quantization
typically suffers from the numerical burden that the algorithms involve. Indeed, the procedure to be performed to obtain the optimal grids is highly time-consuming, especially in the multi-dimensional case, where stochastic algorithms are necessary. 
The recursive marginal quantization, or fast quantization, introduced in \cite{PagSagna15} represents a very useful innovation  in order to overcome the computational difficulties. Sub-optimal (stationary) quantizers of the stochastic process at fixed discretization dates (hence, of random variables) are obtained in a very fast recursive way, to the point that recursive marginal quantization has been successfully applied to many models for which a (time) discretization scheme is available, see e.g.  the non exaustive list of papers: \cite{cfg2017}, \cite{cfg2018} and \cite{Fiorin2018}, \cite{ps2018} for the multi dimensional case. We also mention \cite{mcwrkp2018} where recursive quantization has been applied outside the usual Euler scheme.  \\

Here, we propose a scheme for \eqref{eq:fbsde} that is similar  to the one in \cite{ps2018}, based on recursive quantization, with a crucial difference: in a nutshell, we introduce a new discretization scheme for the control process $V$ that we express in terms of  $U$ and $Y$ instead of $U$ and the Brownian motion $W$ (details will be provided in the sequel).  This apparently small difference leads to a simpler numerical procedure, as  there will be no need to discretize the Brownian motion increments. This reduces the computational time required to solve the FBSDE. In fact, in the approximation of the conditional expectations required in our scheme, we only need the transition probabilities of the quantized process $\widehat Y$, while  \cite{ps2018} need to additionally compute a conditional expectation involving the Brownian increments,  that they have to estimate (they use  Monte Carlo simulation). Such procedure implies an additional numerical effort which is not required in our case.
In other words, once the process $Y$ has been discretized in space via recursive marginal quantization to get $\widehat Y$, we apply our backward approximation scheme in order to get an explicit and fully quantization-based algorithm.\\

We provide two numerical experiments. The first  involves a linear BSDE, so that we can test  our approximated solution in a case where there exists a closed form for the control. Here our procedure reveals to be fast and accurate. The second example focuses on a non-linear BSDE, with unknown closed-form solution, corresponding to a pricing problem where  lending and borrowing rates may be different, as in \cite{Bergman95}. We compare our solution with the one in  \cite{ps2018} that we take as a benchmark. Results are very promising insofar we get  accurate estimates even taking a very  small number of quantizers and time discretization points (20 quantizers and 50 time steps).  \\

The rest of the paper is organised as follows: in Section \ref{sec:FBSDE} we briefly introduce the FBSDE and we recall the main  existence and uniqueness results in order for our working setting to be well-posed. In Section \ref{sec:schemeFBSDE} we illustrate our new scheme for the control $U$. Section \ref{sec:quantization} provides the essentials on recursive marginal quantization that we apply in Section \ref{sec:conditional expectations} to the computation of conditional expectations.  In Section \ref{sec:error} we study the error, while in Section \ref{sec:test} we illustrate some numerical test. Section \ref{sec:conclulsion} concludes.

\section{Forward-Backward stochastic differential equations}\label{sec:FBSDE}
We start by fixing some notations. Vectors will be column vectors and, for $x\in\RR^d$, $|x|$ denotes the Euclidean norm and $\langle x,y\rangle$ denotes the inner product. Matrices are elements of $\RR^{q\times d}$, with $|y|=\sqrt{Trace[yy^\top]}$ and $\langle x,y \rangle=Trace[xy^\top]$.
Let $\left(\Omega,\cF,\PP\right)$ be a probability space rich enough to support an $\RR^q$-valued Brownian motion $W=(W_t)_{t\in[0,T]}$. Let $\FF=(\cF_t)_{t\in[0,T]}$ be the filtration generated by $W$, assumed to satisfy the standard assumptions. We consider the following spaces:
\begin{itemize}
	\item $L^2$ is the space of all $\cF_T$-measurable $\RR^d$-valued random variables $X:\Omega\mapsto\RR^d$ such that $\left\| X\right\|^2=\Ex{}{\left|X\right|^2}<\infty$.
	\item $\HH^{2,q\times d}$ is the space of all predictable $\RR^{q\times d}$-valued processes $\phi:\Omega\times [0,T]\mapsto \RR^{q\times d}$ such that $\Ex{}{\int_0^T|\phi_t|^2dt}<\infty$.
	\item $\mathbb{S}^2$ the space of all adapted processes $\phi:\Omega\times [0,T]\mapsto \RR^{q\times d}$ such that $\Ex{}{\sup_{0\leq t\leq T}|\phi_t|^2}<\infty$.
\end{itemize}

Let $Y=\left( Y _ { t } \right)_{t\in[0,T]}$ be an $\RR^d$-valued process solving the stochastic differential equation (henceforth SDE):
\begin{align}
	\label{eq:fsde}
	Y _ { t } = y _ { 0 } + \int _ { 0 } ^ { t } b \left( Y _ { s } \right) d s + \int _ { 0 } ^ { t } \sigma \left( Y _ { s } \right)^\top d W _ { s } , \quad y _ { 0 } \in \mathbb { R } ^ { d }
\end{align}
and let us consider the following standing assumption:
\begin{assumption}\label{ass:Y}
	The vector fields $b:\mathbb{R}^d \mapsto \mathbb{R}^d$ and $\sigma:\mathbb{R}^d\mapsto \mathbb{R}^{q\times d}$ satisfy the following conditions
	\begin{align}
		|b(y)-b(z)|&\leq \cL_1|y-z|,\\
		|\sigma(y)-\sigma(z)|& \leq\cL_2|y-z|,\\
		|\sigma(y)| &\leq \cL_3(1+|y|),\quad |b(y)| \leq \cL_3(1+|y|),
	\end{align}
	for some positive constants  $\cL_1,\cL_2,\cL_3$.
\end{assumption}
It is well known that under such regularity conditions there exists a unique adapted right continuous with left limits (henceforth RCLL) strong solution $Y^{y_0}=(Y^{y_0}_t)_{t\in[0,T]}$ to \eqref{eq:fsde} which is a homogeneous Markov process. It is also well known that the solution $Y^{y_0}$ satisfies the following: for all couples $(t,y_0),(t,y_0^\prime)\in[0,T]\times \RR^d$ and $p\geq 2$ we have
\begin{align}
	\mathbb{E}\left[\sup _{0 \leq t \leq T}\left|Y^{y_0}_t-y_0\right|^{p}\right] \leq \cL_4\left(1+|y_0|^{p}\right)T,\\
	\mathbb{E}\left[\sup _{0 \leq t \leq T}\left|Y^{y_0}_t-Y^{y_0^\prime}_t\right|^{p}\right] \leq \cL_5\left(|y_0-y_0^\prime|^{p}\right),
\end{align}
where $\cL_4,\cL_5$ are positive constants. To alleviate notations we will simply write $Y$ for the solution, omitting the dependence on the initial condition $y_0$. We investigate a backward SDE with a terminal condition and a generator that depends on the state process solving the forward SDE \eqref{eq:fsde}. More precisely, we consider the backward stochastic differential equation
\begin{align}
	\label{eq:bsde}
	U_{t}=\xi+\int_{t}^{T} f\left(s, Y_{s}, U_{s}, V_{s}\right) d s-\int_{t}^{T} V_{s}^\top d W_{s}, \quad t \in[0, T],
\end{align}
where $V=(V_t)_{t\in[0,T]}$ is a process in $\HH^{2,q\times 1}$.
We will also work under the following:
\begin{assumption}\label{ass:U}
	$(i)$ The function $f :[0, T] \times \mathbb{R}^{d} \times \mathbb{R} \times \mathbb{R}^{q} \rightarrow \mathbb{R}$ is Lipschitz continuous, uniformly in $t \in [0,T]$:
	$$
	\left|f(t, y, u, v)-f\left(t, y^{\prime}, u^{\prime},  v^{\prime}\right)\right| \leq \cL_6\left(\left|y-y^{\prime}\right|+\left|u-u^{\prime}\right|+\left|v-v^{\prime}\right|\right)
	$$
	for a positive constant $\cL_6$.\\
	$(ii)$ The terminal condition $\xi$ is of the form $\xi=h(Y_T)$, for a given Borel function $h : \mathbb{R}^{d} \rightarrow \mathbb{R}$.	
\end{assumption}

The system formed by the forward SDE \eqref{eq:fsde} and the backward SDE \eqref{eq:bsde} is a decoupled forward-backward SDE. Decoupled here means that the forward SDE for $Y$ does not exhibit a dependence on $U$. The following result for FBSDE is standard, see e.g. \cite{delong} Theorem~3.1.1,  Theorem~4.1.3.

\begin{theorem}
	Under  assumptions \ref{ass:Y} and \ref{ass:U} there exists a unique solution $(Y,U,V) \in \mathbb{S}^{2}(\mathbb{R}^d) \times \mathbb{S}^{2}(\mathbb{R}) \times \mathbb{H}^{2,q\times 1}$ to the FBSDE \eqref{eq:fsde}-\eqref{eq:bsde}.
\end{theorem}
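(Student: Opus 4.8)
\medskip

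\noindent\textit{Proof proposal.}
The forward component requires no new work: Assumption \ref{ass:Y} is exactly the classical Lipschitz/linear-growth hypothesis guaranteeing a unique strong solution $Y\in\mathbb{S}^{2}(\RR^d)$ of \eqref{eq:fsde} with the moment bounds already recalled above, so I would fix this $Y$ once and for all and reduce the statement to the following: the BSDE \eqref{eq:bsde} with terminal value $\xi=h(Y_T)$ and generator $(s,u,v)\mapsto f(s,Y_s,u,v)$ admits a unique solution $(U,V)$ in $\mathbb{S}^{2}(\RR)\times\HH^{2,q\times1}$. Two integrability facts, implicit in the hypotheses of \cite{delong}, are needed, and I would either add them to Assumption \ref{ass:U} or derive them from polynomial growth bounds on $h$ and on $f(\cdot,0,0,0)$: namely $\xi\in L^{2}$ (which follows from $|h(y)|\le C(1+|y|^{p})$ together with $\EE[\sup_{t}|Y_t|^{p}]<\infty$), and $f(\cdot,Y_\cdot,0,0)\in\HH^{2}$ (which follows from the Lipschitz bound in Assumption \ref{ass:U}$(i)$ once $f(\cdot,0,0,0)\in L^{2}([0,T])$ is used together with the moment bounds on $Y$).

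The plan is the Picard/fixed-point argument of Pardoux--Peng. Let $\HH^{2}$ denote the space of square-integrable predictable $\RR$-valued processes, and work on the Banach space $\cK:=\HH^{2}\times\HH^{2,q\times1}$ endowed with the equivalent norm $\|(u,v)\|_{\beta}^{2}:=\EE\!\int_{0}^{T}e^{\beta s}\big(|u_s|^{2}+|v_s|^{2}\big)\,ds$. Define a map $\Phi$ as follows: given $(u,v)\in\cK$, set $g_s:=f(s,Y_s,u_s,v_s)$, which lies in $\HH^{2}$ by the Lipschitz property and the integrability noted above; then $M_t:=\Econd{\xi+\int_{0}^{T}g_s\,ds}{\cF_t}$ is a square-integrable $\FF$-martingale, and since $\FF$ is the augmented Brownian filtration the martingale representation theorem provides a unique $V\in\HH^{2,q\times1}$ with $M_t=M_0+\int_0^tV_s^\top\,dW_s$. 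Setting $U_t:=\Econd{\xi+\int_{t}^{T}g_s\,ds}{\cF_t}=M_t-\int_0^t g_s\,ds$ produces a pair $(U,V)$ that solves the \emph{linear} BSDE with data $(\xi,g)$, and I put $\Phi(u,v):=(U,V)$. A fixed point of $\Phi$ is precisely a solution of \eqref{eq:bsde}.

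The heart of the argument is that $\Phi$ is a contraction on $(\cK,\|\cdot\|_{\beta})$ for $\beta$ large. For inputs $(u^{1},v^{1}),(u^{2},v^{2})$ with outputs $(U^{i},V^{i})$, the difference $\delta U:=U^{1}-U^{2}$ solves the linear BSDE with zero terminal value and generator $\delta g_s:=f(s,Y_s,u^{1}_s,v^{1}_s)-f(s,Y_s,u^{2}_s,v^{2}_s)$, for which Assumption \ref{ass:U}$(i)$ gives $|\delta g_s|\le\cL_6\big(|\delta u_s|+|\delta v_s|\big)$. Applying It\^o's formula to $e^{\beta s}|\delta U_s|^{2}$ on $[0,T]$, taking expectations (the stochastic integral is a true martingale by the $L^{2}$ bounds), and using $2ab\le\tfrac{\beta}{2}a^{2}+\tfrac{2}{\beta}b^{2}$ on the cross term one gets
\begin{align*}
\EE|\delta U_0|^{2}+\tfrac{\beta}{2}\,\EE\!\int_{0}^{T}e^{\beta s}|\delta U_s|^{2}\,ds+\EE\!\int_{0}^{T}e^{\beta s}|\delta V_s|^{2}\,ds\;\le\;\frac{4\cL_6^{2}}{\beta}\,\big\|(u^{1},v^{1})-(u^{2},v^{2})\big\|_{\beta}^{2},
\end{align*}
whence $\big\|\Phi(u^{1},v^{1})-\Phi(u^{2},v^{2})\big\|_{\beta}^{2}\le\tfrac{1}{2}\big\|(u^{1},v^{1})-(u^{2},v^{2})\big\|_{\beta}^{2}$ once $\beta$ is chosen (depending only on $\cL_6$) sufficiently large. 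By the Banach fixed point theorem $\Phi$ has a unique fixed point $(U,V)\in\cK$, which is the unique solution of \eqref{eq:bsde} in $\cK$; together with the unique $Y$ this proves existence and uniqueness of $(Y,U,V)$. Finally I would upgrade $U$ from $\HH^{2}$ to $\mathbb{S}^{2}(\RR)$: from $U_t=\Econd{\xi+\int_{t}^{T}g_s\,ds}{\cF_t}$, Doob's maximal inequality and the Burkholder--Davis--Gundy inequality applied to the martingale $M$, together with $\xi\in L^{2}$ and $g\in\HH^{2}$, give $\EE[\sup_{0\le t\le T}|U_t|^{2}]<\infty$. The only genuinely delicate point is the contraction estimate: since $f$ depends on $v$, the $|\delta V_s|^{2}$ contribution produced on the right by $|\delta g_s|^{2}$ must be absorbed against the same quantity appearing on the left, which is exactly what forces the weighted norm and the large-$\beta$ choice rather than the plain $\HH^{2}$ norm.
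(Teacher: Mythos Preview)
Your argument is correct and is precisely the classical Pardoux--Peng contraction proof; the paper itself does not give a proof of this theorem but simply cites \cite{delong}, Theorems 3.1.1 and 4.1.3, so there is nothing to compare at the level of technique. One useful point you raise that the paper leaves implicit: Assumption \ref{ass:U} as stated only says $h$ is Borel and $f$ is Lipschitz in $(y,u,v)$ uniformly in $t$, so the integrability conditions $\xi=h(Y_T)\in L^2$ and $f(\cdot,Y_\cdot,0,0)\in\HH^2$ (equivalently $\int_0^T|f(s,0,0,0)|^2\,ds<\infty$) are genuinely needed for the argument and for the cited result to apply---they are assumed in \cite{delong} and are tacit here.
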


\section{A generic scheme for FBSDEs}\label{sec:schemeFBSDE} 
In the following subsections we will introduce the proposed numerical scheme to approximate the solution of the FBSDE \eqref{eq:fsde}-\eqref{eq:bsde}. To do so, we fix a time discretization: let $n\in\NN$, $\Delta=\Delta_n=\frac{T}{n}$ and set $t_k=\frac{Tk}{n}$. The scheme, given below in \eqref{our_scheme}, is defined as a backward induction and reads as follows:
\begin{equation*}
	\left\{
	\begin{array}{rcl}
		\widetilde{U}_{t_n} & = & h(\overline{Y}_{t_{n}}) \quad \textrm{and for} \quad k=0, \dots, n-1  \\
		\widetilde{U}_{t_k} & = & \Excond{}{\widetilde{U}_{t_{k+1}}}{\cF_{t_k}}+ \Delta \ f({t_{k}},\overline{Y}_{t_{k}},\Excond{}{\widetilde{U}_{t_{k+1}}}{\cF_{t_k}},\widetilde{V}_{t_{k}})\\
		\widetilde{V}_{t_k} &=& \frac{1}{\Delta}  \left[ \sigma \left( \overline{Y} _ { t_k } \right)^{\top} \right]^{-1}  \Excond{}{\widetilde{U}_{t_{k+1}}\left(\overline{Y}_{t_{k+1}}-\overline{Y}_{t_k}\right)}{\cF_{t_k}}  \\
		& & -  \left[ \sigma \left( \overline{Y} _ { t_k } \right)^{\top} \right]^{-1}  \Excond{}{\widetilde{U}_{t_{k+1}}}{\cF_{t_k}}b \left( \overline{Y} _ { t_{k} } \right) \\
	\end{array}
	\right.
\end{equation*}
where $\widetilde{U}$ and $\widetilde{V}$ are approximations of $U$ and $V$ (that will be properly introduced in Subsection \ref{subsec:truncation}) and where $\overline Y$ denotes a suitable (time) discretization of $Y$ that, at this point, is left unspecified. The scheme is similar to the one proposed in \cite{ps2018}, the novelty being a new discretization scheme for the control process.  More precisely, $\widetilde V_{t_k}, k=0, \dots, n-1,$ is no longer a function of $\widetilde U_{t_{k+1}}, W_{t_k},  W_{t_{k+1}}$, but depends here only on $\widetilde U_{t_{k+1}}, \overline Y_{t_k},  \overline Y_{t_{k+1}}$. This leads to a simpler numerical procedure, which is faster to implement. Indeed, since $Y$ is approximated indipendently from $U$ and $V$, there will be no need to discretize the Brownian motion increments and this will result in a speed-up of the computational time required to solve the FBSDE. More details on this will be given in Remark \ref{rem:diff_Pages}.
From a practical point of view, once the stochastic process $\overline Y$ has been discretized in space via recursive marginal quantization to get $\widehat Y$, the scheme reads as in Equation \eqref{our_scheme_quant} and the backward recursion results to be explicit and fully quantization-based.

\subsection{Scheme for the value process $U$}\label{sec:scheme_U} 
\textcolor{black}{Following \cite{zhao2006} we provide a step by step derivation of the numerical scheme for the process $U$.} 
Let $(Y,U,V)$ be the adapted solution to the FBSDE \eqref{eq:fsde}-\eqref{eq:bsde}. Restricting ourselves to two consecutive points in time $t_{k+1}$ and $t_k$, we write
\begin{align}\label{eq:U_discretized}
	U_{t_k}=U_{t_{k+1}}+\int_{t_k}^{t_{k+1}}f(s,Y_s,U_s,V_s)ds-\int_{t_k}^{t_{k+1}}V^\top_sdW_s
\end{align}
and taking $\cF_{t_k}$-conditional expectations on both sides we get
\begin{align*}
	U_{t_k}=\Excond{}{U_{t_{k+1}}}{\cF_{t_k}}+\int_{t_k}^{t_{k+1}}\Excond{}{f(s,Y_s,U_s,V_s)}{\cF_{t_k}}ds.
\end{align*}
Let us first concentrate on the integral term, using $\theta_1\in[0,1]$ we write
\begin{align*}
	\begin{aligned}
		\int_{t_k}^{t_{k+1}}\Excond{}{f(s,Y_s,U_s,V_s)}{\cF_{t_k}}ds&=(t_{k+1}-t_k)\left\{(1-\theta_1)\Excond{}{f({t_{k+1}},Y_{t_{k+1}},U_{t_{k+1}},V_{t_{k+1}})}{\cF_{t_k}}\right.\\
		&\left.\quad+\theta_1f({t_{k}},Y_{t_{k}},U_{t_{k}},V_{t_{k}})\right\}+R^U
	\end{aligned}
\end{align*}
where the error term $R^U$ is defined as
\begin{align*}
	\begin{aligned}
		R^U&:=\int_{t_k}^{t_{k+1}}\left(\Excond{}{f(s,Y_s,U_s,V_s)}{\cF_{t_k}}-(1-\theta_1)\Excond{}{f({t_{k+1}},Y_{t_{k+1}},U_{t_{k+1}},V_{t_{k+1}})}{\cF_{t_k}}\right.\\
		&\left.\quad\quad\quad\quad\quad\quad+\theta_1f({t_{k}},Y_{t_{k}},U_{t_{k}},V_{t_{k}})\right)ds.
	\end{aligned}
\end{align*}
Hence we arrive at
\begin{align*}
	U_{t_k}&=\Excond{}{U_{t_{k+1}}}{\cF_{t_k}}+(t_{k+1}-t_k)\left\{(1-\theta_1)\Excond{}{f({t_{k+1}},Y_{t_{k+1}},U_{t_{k+1}},V_{t_{k+1}})}{\cF_{t_k}}\right.\\
	&\left.\quad+\theta_1f({t_{k}},Y_{t_{k}},U_{t_{k}},V_{t_{k}})\right\}+R^U .
\end{align*}
\begin{remark}
	In most situations, we do not have an exact simulation scheme for the solution of the forward SDE \eqref{eq:fsde}. This means in general that we are not able to simulate $Y$ (i.e. the exact solution of \eqref{eq:fsde}), and we need to introduce a suitable discretization $\overline{Y}$, such as the Euler-Maruyama scheme, the Milstein discretization or higher order scheme as presented e.g. in \cite{kloedenPlaten}.
\end{remark}
For the moment, let $\overline{Y}$ be a discretization scheme for $Y$, which is still left unspecified. We write
\begin{align*}
	\begin{aligned}
		U_{t_k}&=\Excond{}{U_{t_{k+1}}}{\cF_{t_k}}+(t_{k+1}-t_k)\left\{(1-\theta_1)\left(\Excond{}{f({t_{k+1}},\overline{Y}_{t_{k+1}},U_{t_{k+1}},V_{t_{k+1}})}{\cF_{t_k}}+R^{f1}\right)\right.\\
		&\left.\quad+\theta_1\left(f({t_{k}},\overline{Y}_{t_{k}},U_{t_{k}},V_{t_{k}})+R^{f2}\right)\right\}+R^U, 
	\end{aligned}
\end{align*}
where
\begin{align*}
	R^{f1}&:= \Excond{}{f({t_{k+1}},Y_{t_{k+1}},U_{t_{k+1}},V_{t_{k+1}})}{\cF_{t_k}}-\Excond{}{f({t_{k+1}},\overline{Y}_{t_{k+1}},U_{t_{k+1}},V_{t_{k+1}})}{\cF_{t_k}}\\
	R^{f2}&:=f({t_{k}},Y_{t_{k}},U_{t_{k}},V_{t_{k}})-f({t_{k}},\overline{Y}_{t_{k}},U_{t_{k}},V_{t_{k}}).
\end{align*}
Setting $R^{f}:=(1-\theta_1)R^{f1}+\theta_1R^{f2}$ we finally arrive at
\begin{align}
	\label{eq:valueProcessWithErrors}
	\begin{aligned}
		U_{t_k}&=\Excond{}{U_{t_{k+1}}}{\cF_{t_k}}+(t_{k+1}-t_k)\left\{(1-\theta_1)\Excond{}{f({t_{k+1}},\overline{Y}_{t_{k+1}},U_{t_{k+1}},V_{t_{k+1}})}{\cF_{t_k}}\right.\\
		&\left.\quad+\theta_1f({t_{k}},\overline{Y}_{t_{k}},U_{t_{k}},V_{t_{k}})\right\}+R^U+R^{f}.
	\end{aligned}
\end{align}
We observe that in \eqref{eq:valueProcessWithErrors} the discretization error is due to the time discretization and the choice of the numerical scheme for the forward process $Y$. Further sources of error will arise in the space dimension as we will approximate the conditional expectations appearing in \eqref{eq:valueProcessWithErrors}.

\subsection{Scheme for the control}
We derive the newly proposed scheme for the numerical approximation of the control process $V$. What is tipically done in the literature is obtaining a discretization scheme for $V$ which involves the increments of the Brownian motion. This is done by multiplying Equation \eqref{eq:U_discretized} by $(W_{t_{k+1}} - W_{t_k})$ and then taking as usual conditional expectations and truncating the error terms.
We will proceed here in a different way, which is new, up to our knowledge. Our objective, indeed, is to derive an update rule for the control that only involves $Y$ (i.e. the process that we will quantize in the sequel) and not $W$. 
To this end we consider again the BSDE \eqref{eq:U_discretized} and multiply both sides by $\int_{t_k}^{t_{k+1}}\sigma(Y_s)^\top dW_s$:
\begin{align}
	\begin{aligned}\label{eq:scheme_U_0}
		U_{t_k}\int_{t_k}^{t_{k+1}}\sigma(Y_s)^\top dW_s&=U_{t_{k+1}}\int_{t_k}^{t_{k+1}}\sigma(Y_s)^\top dW_s+\int_{t_k}^{t_{k+1}}f(s,Y_s,U_s,V_s)ds\int_{t_k}^{t_{k+1}}\sigma(Y_s)^\top dW_s\\
		&\quad-\int_{t_k}^{t_{k+1}}V^\top_sdW_s\int_{t_k}^{t_{k+1}}\sigma(Y_s)^\top dW_s.
	\end{aligned}
\end{align}
We take then $\cF_{t_k}$-conditional expectations on both sides, thus obtaining the following identity
\begin{align}
	\label{eq:bsdeMultiplied}
	\begin{aligned}
		\underbrace{U_{t_k}\Excond{}{\int_{t_k}^{t_{k+1}}\sigma(Y_s)^\top dW_s}{\cF_{t_k}}}_{(A)}&=\underbrace{\Excond{}{U_{t_{k+1}}\int_{t_k}^{t_{k+1}}\sigma(Y_s)^\top dW_s}{\cF_{t_k}}}_{(B)}\\
		&+ \underbrace{\Excond{}{\int_{t_k}^{t_{k+1}}f(s,Y_s,U_s,V_s)ds\int_{t_k}^{t_{k+1}}\sigma(Y_s)^\top dW_s}{\cF_{t_k}}}_{(C)}\\
		&- \underbrace{ \Excond{}{\int_{t_k}^{t_{k+1}}V^\top_sdW_s\int_{t_k}^{t_{k+1}}\sigma(Y_s)^\top dW_s}{\cF_{t_k}}}_{(D)}.
	\end{aligned}
\end{align}
We now analyze every conditional expectation in \eqref{eq:bsdeMultiplied} starting from $(D)$:
\begin{itemize}
	\item{(D)} Via It\^o isometry we find
$		\Excond{}{\int_{t_k}^{t_{k+1}}V^\top_sdW_s\int _ { t_k } ^ { t_{k+1} } \sigma \left( Y _ { s } \right)^\top d W _ { s }}{\cF_{t_k}}=\Excond{}{\int_{t_k}^{t_{k+1}}\sigma \left( Y _ { s } \right)^\top V_s ds}{\cF_{t_k}} $
	and using $\theta_2\in[0,1]$ we have
	\begin{align*}
		\begin{aligned}
			\Excond{}{\int_{t_k}^{t_{k+1}}\sigma \left( Y _ { s } \right)^\top V_s ds}{\cF_{t_k}}&=(t_{k+1}-t_k)\left\{(1-\theta_2)\Excond{}{\sigma \left( Y _ { t_{k+1} } \right)^\top V_{t_{k+1}}}{\cF_{t_k}}+\theta_2\sigma \left( Y _ { t_k } \right)^\top V_{t_k}\right\}\\
			&\quad+R^{V-\theta},
		\end{aligned}
	\end{align*}
	where
	$	R^{V-\theta}:=\Excond{}{\int_{t_k}^{t_{k+1}} \left[\sigma \left( Y _ { s } \right)^\top V_s-(1-\theta_2)\sigma \left( Y _ { t_{k+1} } \right)^\top V_{t_{k+1}}-\theta_2\sigma \left( Y _ { t_k } \right)^\top V_{t_k} \right] ds}{\cF_{t_k}}$.
	We now take into account the impact of the numerical scheme to approximate $Y$, namely we insert $\overline{Y}$:
	\begin{align*}
		\begin{aligned}
			\Excond{}{\int_{t_k}^{t_{k+1}}\sigma \left( Y _ { s } \right)^\top V_s ds}{\cF_{t_k}}&=(t_{k+1}-t_k)\left\{(1-\theta_2)\Excond{}{\sigma \left( \overline{Y} _ { t_{k+1} } \right)^\top V_{t_{k+1}}}{\cF_{t_k}}+\theta_2\sigma \left( \overline{Y} _ { t_k } \right)^\top V_{t_k}\right\}\\
			&+R^{V-\theta}+R^{V-Y},
		\end{aligned}
	\end{align*}
	with
	$	R^{V-Y}:=(t_{k+1}-t_k)\left\{(1-\theta_2)\left(\sigma \left({Y} _ { t_{k+1} }\right)^\top -\sigma \left( \overline{Y} _ { t_{k+1} } \right)^\top\right)V_{t_{k+1}}+\theta_2\left(\sigma \left({Y} _ { t_{k+1} }\right)^\top -\sigma \left( \overline{Y} _ { t_{k+1} } \right)^\top\right)V_{t_k}\right\} $.
	\item{(C)} We clearly have:
	\begin{align*}
		\Excond{}{\int_{t_k}^{t_{k+1}}f(s,Y_s,U_s,V_s)ds\int _ { t_k } ^ { t_{k+1} } \sigma \left( Y _ { s } \right)^\top d W _ { s }}{\cF_{t_k}}=0.
	\end{align*}
	\item{(A)} Here, too:
	\begin{align*}
		U_{t_{k}}\Excond{}{\int _ { t_k } ^ { t_{k+1} } \sigma \left( Y _ { s } \right)^\top d W _ { s }}{\cF_{t_k}}&=0.
	\end{align*}
	\item{(B)} A distinctive feature of our numerical scheme is based on the following simple observation: we can exploit the dynamics \eqref{eq:fsde} to express the stochastic integral in $(B)$  as follows
	\begin{align}
		\label{eq:crucial}
		\Excond{}{U_{t_{k+1}}\int _ { t_k } ^ { t_{k+1} } \sigma \left( Y _ { s } \right)^\top d W _ { s }}{\cF_{t_k}}&=\Excond{}{U_{t_{k+1}}\left(Y_{t_{k+1}}-Y_{t_k}-\int _ { t_k } ^ { t_{k+1} } b \left( Y _ { s } \right) ds\right)}{\cF_{t_k}}.
	\end{align}
	Splitting the conditional expectation on the right hand side, we obtain two simple conditional expectations that can be suitably estimated, once we have an approximation for the transition probabilities of the forward process $Y$.  We write
	\begin{align*}
		\Excond{}{U_{t_{k+1}}\left(Y_{t_{k+1}}-Y_{t_k}\right)}{\cF_{t_k}}=\Excond{}{U_{t_{k+1}}\left(\overline{Y}_{t_{k+1}}-\overline{Y}_{t_k}\right)}{\cF_{t_k}}+R^{U-Y},
	\end{align*}
	where
	$	R^{U-Y}:=\Excond{}{U_{t_{k+1}}\left(Y_{t_{k+1}}-\overline{Y}_{t_{k+1}}\right)}{\cF_{t_k}}-\Excond{}{U_{t_{k+1}}}{\cF_{t_k}}\left(Y_{t_{k}}-\overline{Y}_{t_{k}}\right)$,
	while for the second expectation in \eqref{eq:crucial} we have
	\begin{align*}
		\begin{aligned}
			\Excond{}{U_{t_{k+1}}\int _ { t_k } ^ { t_{k+1} } b \left( Y _ { s } \right) ds}{\cF_{t_k}}&=(t_{k+1}-t_k)(1-\theta_2)\Excond{}{U_{t_{k+1}}b \left( \overline{Y} _ { t_{k+1} } \right)}{\cF_{t_k}}\\
			& \ +(t_{k+1}-t_k)\theta_2\Excond{}{U_{t_{k+1}}}{\cF_{t_k}}b \left( \overline{Y} _ { t_{k} } \right)\\
			& \ +R^{b-\theta}+R^{b-Y},
		\end{aligned}
	\end{align*}
	where 
	\begin{align*}
		\begin{aligned}
			R^{b-\theta}&:=\Excond{}{U_{t_{k+1}}\left(\int _ { t_k } ^ { t_{k+1} } b \left( Y _ { s } \right) ds-(t_{k+1}-t_k)\left\{(1-\theta_2)b \left( Y _ { t_{k+1} } \right)+\theta_2b \left( Y _ { t_{k} } \right)\right\}\right)}{\cF_{t_k}}\\
			R^{b-Y}&:=(t_{k+1}-t_k)(1-\theta_2)\Excond{}{U_{t_{k+1}}\left(b \left( Y _ { t_{k+1} } \right)-b \left( \overline{Y} _ { t_{k+1} } \right)\right)}{\cF_{t_k}}\\
			& \ +(t_{k+1}-t_k)\theta_2\Excond{}{U_{t_{k+1}}}{\cF_{t_k}}\left(b \left( Y _ { t_{k} } \right)-b \left( \overline{Y} _ { t_{k} } \right)\right).
		\end{aligned}
	\end{align*}
\end{itemize}
By regrouping all terms $(A), (B), (C)$ and $(D)$ we obtain the following relation, providing an implicit update rule for the control process $V$ (the explicit rule for the control $V$ will be specified in the next subsection): 
\begin{align}
	\label{eq:updateControl}
	\begin{aligned}
		0&=\Excond{}{U_{t_{k+1}}\left(\overline{Y}_{t_{k+1}}-\overline{Y}_{t_k}\right)}{\cF_{t_k}}+R^{U-Y}  -(t_{k+1}-t_k)(1-\theta_2)\Excond{}{U_{t_{k+1}}b \left( \overline{Y} _ { t_{k+1} } \right)}{\cF_{t_k}}\\
		& \ -(t_{k+1}-t_k)\theta_2\Excond{}{U_{t_{k+1}}}{\cF_{t_k}}b \left( \overline{Y} _ { t_{k} } \right)  -R^{b-\theta}-R^{b-Y}\\
		& \ -(t_{k+1}-t_k)\left\{(1-\theta_2)\Excond{}{\sigma \left( \overline{Y} _ { t_{k+1} } \right)^\top V_{t_{k+1}}}{\cF_{t_k}} + \theta_2 \ \sigma \left( \overline{Y} _ { t_k } \right)^\top V_{t_k}\right\}  -R^{V-\theta}-R^{V-Y}.
	\end{aligned}
\end{align}

\subsection{The truncated scheme}\label{subsec:truncation}
Starting from Equations \eqref{eq:valueProcessWithErrors} and \eqref{eq:updateControl} and by truncating all error terms, we obtain the following system of two equations (for each $k$) for the couple $(\widetilde{U},\widetilde{V})$, where  $(\widetilde{U},\widetilde{V})$ are approximations of  $({U},{V})$ where we recall that $\overline Y$ is a suitable discretization of $Y$
\begin{align}
	\begin{aligned}
		\widetilde{U}_{t_k}&=\Excond{}{\widetilde{U}_{t_{k+1}}}{\cF_{t_k}}+(t_{k+1}-t_k)\left\{(1-\theta_1)\Excond{}{f({t_{k+1}},\overline{Y}_{t_{k+1}},\widetilde{U}_{t_{k+1}},\widetilde{V}_{t_{k+1}})}{\cF_{t_k}}\right.\\
		&\left.\quad+\theta_1f({t_{k}},\overline{Y}_{t_{k}},\widetilde{U}_{t_{k}},\widetilde{V}_{t_{k}})\right\}
	\end{aligned}\\
	\begin{aligned}
		0&=\Excond{}{\widetilde{U}_{t_{k+1}}\left(\overline{Y}_{t_{k+1}}-\overline{Y}_{t_k}\right)}{\cF_{t_k}}\\
		& \ -(t_{k+1}-t_k)(1-\theta_2)\Excond{}{\widetilde{U}_{t_{k+1}}b \left( \overline{Y} _ { t_{k+1} } \right)}{\cF_{t_k}}\\
		& \ -(t_{k+1}-t_k)\theta_2\Excond{}{\widetilde{U}_{t_{k+1}}}{\cF_{t_k}}b \left( \overline{Y} _ { t_{k} } \right)\\
		& \ -(t_{k+1}-t_k)\left\{(1-\theta_2)\Excond{}{\sigma \left( \overline{Y} _ { t_{k+1} } \right)^\top \widetilde{V}_{t_{k+1}}}{\cF_{t_k}} + \theta_2\sigma \left( \overline{Y} _ { t_k } \right)^\top \widetilde{V}_{t_k}\right\}.
	\end{aligned}
\end{align}

\begin{remark}
	The second equation above (which is the truncation of Equation \eqref{eq:updateControl}) provides an approximation scheme for $\widetilde V_{t_k}$ as a function of $\widetilde V_{t_{k+1}}, \widetilde U_{t_{k+1}}, \overline Y_{t_k}, \overline Y_{t_{k+1}}$.
\end{remark}

In particular, if we set $\theta_1=\theta_2=1$, we obtain the recursive scheme (which is not yet fully explicit):

\begin{equation*}
	\begin{cases}
		\widetilde{U}_{t_k}=\Excond{}{\widetilde{U}_{t_{k+1}}}{\cF_{t_k}}+(t_{k+1}-t_k)f({t_{k}},\overline{Y}_{t_{k}},\widetilde{U}_{t_{k}},\widetilde{V}_{t_{k}})\\
		\widetilde{V}_{t_k} = \frac{1}{(t_{k+1}-t_k)}  \left[ \sigma \left( \overline{Y} _ { t_k } \right)^{\top} \right]^{-1}  \Excond{}{\widetilde{U}_{t_{k+1}}\left(\overline{Y}_{t_{k+1}}-\overline{Y}_{t_k}\right)}{\cF_{t_k}} \\
		\quad\quad \quad\quad \quad\quad-   \left[ \sigma \left( \overline{Y} _ { t_k } \right)^{\top} \right]^{-1}  \Excond{}{\widetilde{U}_{t_{k+1}}}{\cF_{t_k}}b \left( \overline{Y} _ { t_{k} } \right),
	\end{cases}
\end{equation*}
where $ \left[ \sigma \left( \overline{Y} _ { t_k } \right)^{\top} \right]^{-1} $ denotes the $(q \times d)$ left-inverse of the matrix $\sigma \left( \overline{Y} _ { t_k } \right)^{\top} $. 
\begin{remark}
	In Section \ref{sec:error}, focusing on the error analysis, we will for simplicity consider the case when $q=d$ and we will work under Assumption \ref{ellipticity}, which will guarantee the invertibility of $\sigma$.
\end{remark}

\begin{remark}
	In \cite{ps2018} the scheme is made fully explicit by performing a conditioning inside the driver, which results in the following
	\begin{equation}
		\label{scheme_ps}
		\left\{
		\begin{array}{rcl}
			\widetilde{U}_{t_n} & = & h(\overline{Y}_{t_{n}}) \quad \textrm{and for} \quad k=0, \dots, n-1 \\
			\widetilde{U}_{t_k} & = & \Excond{}{\widetilde{U}_{t_{k+1}}}{\cF_{t_k}}+(t_{k+1}-t_k)f({t_{k}},\overline{Y}_{t_{k}},\Excond{}{\widetilde{U}_{t_{k+1}}}{\cF_{t_k}},\widetilde{V}_{t_{k}})\\
			\widetilde{V}_{t_k}^{\textrm{PS}} &=&\frac{1}{(t_{k+1} - t_k)} \Excond{}{\widetilde{U}_{t_{k+1}}\left( W_{t_{k+1}} - W_{t_k}\right)}{\cF_{t_k}} .\\
		\end{array}
		\right.
	\end{equation}
\end{remark}
So, borrowing this idea, we are now in a position to finally state our proposed scheme as (recall that $(t_{k+1} - t_k) =\Delta$ for every $k=0, \dots, n-1$):
\begin{equation}
	\label{our_scheme}
	\left\{
	\begin{array}{rcl}
		\widetilde{U}_{t_n} & = & h(\overline{Y}_{t_{n}}) \quad \textrm{and for} \quad k=0, \dots, n-1  \\
		\widetilde{U}_{t_k} & = & \Excond{}{\widetilde{U}_{t_{k+1}}}{\cF_{t_k}}+ \Delta \ f({t_{k}},\overline{Y}_{t_{k}},\Excond{}{\widetilde{U}_{t_{k+1}}}{\cF_{t_k}},\widetilde{V}_{t_{k}})\\
		\widetilde{V}_{t_k} &=& \frac{1}{\Delta}  \left[ \sigma \left( \overline{Y} _ { t_k } \right)^{\top} \right]^{-1}  \Excond{}{\widetilde{U}_{t_{k+1}}\left(\overline{Y}_{t_{k+1}}-\overline{Y}_{t_k}\right)}{\cF_{t_k}}  \\
		& & -  \left[ \sigma \left( \overline{Y} _ { t_k } \right)^{\top} \right]^{-1}  \Excond{}{\widetilde{U}_{t_{k+1}}}{\cF_{t_k}}b \left( \overline{Y} _ { t_{k} } \right). \\
	\end{array}
	\right.
\end{equation}

We stress that, for the moment, we obtained a general, yet original, discretization for the FBSDE \eqref{eq:fsde}-\eqref{eq:bsde}. The role of recursive marginal quantization will become apparent as we approximate the conditional expectations appearing in the scheme above.

\section{A primer on recursive product marginal quantization}\label{sec:quantization}
We provide some background on recursive marginal quantization. We consider a diffusion process $Y$ as in \eqref{eq:fsde} and its discretized version $\overline{Y}$ over a given time grid. Quantizing the diffusion process $Y$ via recursive marginal quantization (henceforth RMQ) means the following: we consider the discretized analog $\overline{Y}$ of $Y$ and, for each given point in time, we project every single random variable $\overline{Y}_{t_{k+1}}$ on a finite grid of points by exploiting the fact that the conditional law of $\overline{Y}_{t_{k+1}}$ given its value at time $t_k$ is known. When the discretization $\overline{Y}$ is chosen to be the Euler scheme the conditional law of $\overline{Y}_{t_{k+1}}$ given $$\overline{Y}_{t_{k}}$$ is Gaussian. This technique was first introduced in \cite{PagSagna15} and was further developed in \cite{Fiorin2018} and applied in different settings, such as \cite{cfg2017} among others.

Let us now provide a minimum insight on RMQ. The Euler scheme of $Y$, solution of \eqref{eq:fsde}, is defined via the recursion
\begin{align}
	\label{eq:EulerScheme}
	\overline { Y } _ { t _ { k + 1 } } = \overline { Y } _ { t _ { k } } + \Delta \ b \left( \overline { Y } _ { t _ { k } } \right) + \sigma \left( \overline { Y } _ { t _ { k } } \right)^{\top} \left( W _ { t _ { k + 1 } } - W _ { t _ { k } } \right) , \quad \overline { y } _ { 0 } \in \mathbb { R } ^ { d },
\end{align}
for $\Delta = \Delta _ { n } = \frac { T } { n }$ and $t _ { k } = \frac { k T } { n }$. For notational simplicity, in this section we set $\overline { Y }_k:=\overline { Y }_{t_k}$.
\begin{remark} 
	 Some extensions are possible:
	\begin{itemize}
		\item[a)] The results presented here can be extended without any technical issue, yet with additional notational burden, to the case when the coefficients $b$ and $\sigma$ are no longer time homogeneous (this is the setting in \cite{ps2018}). 
		\item[b)] It is possible to consider higher order schemes such as e.g. the Milstein discretization as in \cite{mcwrkp2018}. This has an obvious implication on the shape of the conditional distribution of $\overline { Y } _ {  { k + 1 } }$ given $\overline { Y } _ {  { k } }$.
	\end{itemize}
\end{remark}
%

We define the Euler operator, which allows one to express the distribution of $\overline { Y }^{\ell} _ {  k  +1}$ given $\{ \overline { Y }^{\ell} _ {  k  }=y \}$, with $\ell =1,\dots, d$
\begin{align}\label{Euler_op}
	\mathcal { E } _ { k } ( y , z ) : = y + \Delta \ b \left( y \right) + \sqrt { \Delta } \ \sigma \left( y \right)^{\top} z , \quad y \in \mathbb { R } ^ { d } , z \in \mathbb { R } ^ { q }.
\end{align}
Our target is discretizing $\overline { Y } _ { k + 1 } $ via a finite grid, $\Gamma_{k+1}$, under the constraint that the resulting approximating error has to be minimal.
Namely, using the Euler operator, we consider the $L^2$-distortion at time $t_{k+1}$, $\overline { D } _ { k + 1 }$, which is defined as the square of the $L^2$-distance between the random variable $\overline Y_{k+1}$ and the grid $\Gamma_{k+1}$
\begin{align}
	\begin{aligned} \overline { D } _ { k + 1 } \left( \Gamma _ { k + 1 } \right) & = \mathbb { E } \left[ \left( \operatorname { dist } \left( \overline { Y } _ { k + 1 } \Gamma _ { k + 1 } \right) ^ { 2 } \right] \right. = \mathbb { E } \left[ \operatorname { dist } \left( \mathcal { E } _ { k } \left( \overline { Y } _ { k } , Z _ { k + 1 } \right) , \Gamma _ { k + 1 } \right) ^ { 2 } \right] \end{aligned}
\end{align}
for $Z _ { k + 1 } \sim \mathcal { N } \left( 0 ,  I _ { q } \right)$, and we aim at finding a grid $\Gamma_{k+1}^{\star}$ that minimizes the distortion function.  For a given size of the grid, it is known that an optimal quantizer exists (see e.g. \cite{GrafLu2000}). Moreover, in the one-dimensional case, if the density of the random variable to be discretized is absolutely continuous and log-concave, then the optimal quantizer is unique.
\begin{remark}
	The conditional distribution of the  Euler process $\overline { Y }$ is Gaussian. Also, each component of a Gaussian vector is Gaussian. 
\end{remark}
To quantize the vector $\overline { Y }_k\in\RR^d$, \cite{Fiorin2018} consider each component of the vector separately: they quantize $\overline { Y } _ { k } ^ { \ell }$ over a grid $\Gamma _ { k } ^ { \ell }$ of size $N _ { k } ^ { \ell }$ for $\ell = 1 , \ldots , d$ and then they define its product quantization $\widehat { Y } _ { k }$, i.e., the quantizer of the whole vector, on the product grid $\Gamma _ { k } = \bigotimes _ { \ell = 1 } ^ { d } \Gamma _ { k } ^ { \ell }$ of size $N _ { k } = N _ { k } ^ { 1 } \times \ldots \times N _ { k } ^ { d }$ as $\widehat { Y } _ { k } = \left( \widehat { Y } _ { k } ^ { 1 } , \ldots , \widehat { Y } _ { k } ^ { d } \right)$.

More precisely, for any $k = 0 , \ldots , n$ and any given $\ell \in \{ 1 , \ldots , d \}$, $\widehat { Y } _ { k } ^ { \ell }$ denotes the quantization of $\overline { Y } _ { k } ^ { \ell }$ on the grid $\Gamma _ { k } ^ { \ell } = \left\{ y_ { k } ^ { \ell , i _ { \ell } } , i _ { \ell } = 1 , \ldots , N _ { k } ^ { \ell } \right\}$. The idea of \cite{Fiorin2018} is as follows: assume we have access to $\Gamma _ { k } ^ { \ell }$, an $N _ { k } ^ { \ell }$-quantizer, for $\ell = 1 , \ldots , d$, of the $\ell$-th component $\overline { Y } _ { k } ^ { \ell }$ of $\overline { Y } _ { k }$. They define a componentwise recursive product quantizer $\Gamma _ { k } = \bigotimes _ { \ell = 1 } ^ { d } \Gamma _ { k } ^ { \ell }$ of size $N _ { k } = N _ { k } ^ { 1 } \times \ldots \times N _ { k } ^ { d }$ of the vector $\overline { Y } _ { k } = \left( \overline { Y } _ { k } ^ { \ell } \right) _ { \ell = 1 , \ldots , d }$ via
\begin{align}
	\Gamma _ { k } = \left\{ \left( y _ { k } ^ { 1 , i _ { 1 } } , \ldots , y_ { k } ^ { d , i _ { d } } \right) , \quad y_ { k } ^ { \ell , i_\ell } \in \Gamma _ { k } ^ { \ell } \quad \text { for } \ell \in \{ 1 , \ldots , d \} \text { and } i _ { \ell } \in \left\{ 1 , \ldots , N _ { k } ^ { \ell } \right\} \right\}.
\end{align}
To leverage the conditional normality feature, suppose now that $\overline { Y } _ { k }$ has already been quantized and that we have the associated weights $\mathbb { P } \left( \widehat { Y } _ { k } = \mathbf {y_ { k } ^ { i}} \right) $, $ \mathbf i \in I _ { k }$, where $ \mathbf {y_ { k } ^ { i}} : = \left( y_ { k } ^ { 1 , i _ { 1 } } , \ldots , y_ { k } ^ { d , i _ { d } } \right),$ $ \mathbf i : = \left( i _ { 1 } , \ldots , i _ { d } \right) \in I _ { k }$ and
\begin{align}
	I _ { k } = \left\{ \left( i _ { 1 } , \ldots , i _ { d } \right) , i _ { \ell } \in \left\{ 1 , \ldots , N _ { k } ^ { \ell } \right\} \right\}, \ k \in \{ 0 , \ldots , n \}.
\end{align}
By setting $\widetilde { Y } _ { k } ^ { \ell } = \mathcal { E } _ { k } ^ { \ell } \left( \widehat { Y } _ { k } , Z _ { k + 1 } \right)$, one can approximate each component of $\overline { D } _ { k + 1 } \left( \Gamma _ { k + 1 } \right)$ via $\widetilde { D } _ { k + 1 } ^ { \ell } \left( \Gamma _ { k + 1 } ^ { \ell } \right)$, $\ell=1,\ldots, d$
where
\begin{align}
	\begin{aligned}
		\widetilde { D } _ { k + 1 } ^ { \ell } \left( \Gamma _ { k + 1 } ^ { \ell } \right) & : = \mathbb { E } \left[ \operatorname { dist }\left( \widetilde { Y } _ { k + 1 } ^ { \ell } , \Gamma _ { k + 1 } ^ { \ell } \right) ^ { 2 } \right]  = \mathbb { E } \left[ \operatorname { dist } \left( \mathcal { E } _ { k } ^ { \ell } \left( \widehat { Y } _ { k } , Z _ { k + 1 } \right), \Gamma _ { k + 1 } ^ { \ell } \right) ^ { 2 } \right] \\ & = \sum _ {\mathbf  i \in I _ { k } } \mathbb { E } \left[ \operatorname { dist } \left( \mathcal { E } _ { k } ^ { \ell } \left( \mathbf{y_ { k } ^ { i }} , Z _ { k + 1 } \right) , \Gamma _ { k + 1 }^{\ell} \right) ^ { 2 } \right] \mathbb { P } \left( \widehat { Y } _ { k } =\mathbf{y_ { k } ^ { i }} \right) .
	\end{aligned}
\end{align}
Such approximation allows \cite{Fiorin2018} to introduce the sequence of product recursive quantizations of $\widehat { Y }=\left( \widehat { Y } _ { k } \right) _ { k = 0 , \cdots , n}$, for $k=0,\ldots n-1$, as
\begin{align}
	\label{eq:quantizationAlgorithm}
	\begin{cases}
		\widetilde { Y } _ { 0 } = \widehat { Y } _ { 0 } = y_0 , \quad \widehat { Y } _ { k } = \left( \widehat { Y } _ { k } ^ { 1 }, \ldots , \widehat { Y } _ { k } ^ { d } \right) \\
		\widehat { Y } _ { k } ^ { \ell } = \operatorname { Proj } _ { \Gamma _ { k } ^ { \ell } } \left( \widetilde { Y } _ { k } ^ { \ell } \right)  \quad \text { and } \quad \widetilde { Y } _ { k + 1 } ^ { \ell } = \mathcal { E } _ { k } ^ { \ell } \left( \widehat { Y } _ { k } , Z _ { k + 1 } \right) , \ell = 1 , \ldots , d\\
		\mathcal { E } _ { k } ^ { \ell } ( y , z ) = y^{\ell} + \Delta b^{\ell} (y) + \sqrt { \Delta } \left( \sigma ^ { \ell \bullet } \left(  y \right) | z \right) , z = \left( z ^ { 1 } , \ldots , z ^ { q } \right) \in \mathbb { R } ^ { q }\\
		y = \left( y ^ { 1 } , \ldots , y ^ { d } \right) , b = \left( b ^ { 1 } , \ldots , b ^ { d } \right) \text { and } \left( \sigma ^ { \ell  \bullet } \left( y \right) | z \right) = \sum _ { m = 1 } ^ { q } \sigma ^ { \ell m } \left( y \right) z ^ { m }
	\end{cases}
\end{align}
where for every matrix $A \in \mathcal M(d,q), a^{\ell \bullet} = {[a_{\ell j}]}_{j = 1, \dots, q}$.

We conclude this section by providing some information about the computation of transition probabilities. In \cite{Fiorin2018} three methods are proposed: the first one, in their Proposition 3.1, concerns the computation of transition probabilities of the whole vector $\widehat { Y }$ 
, the second covers the case where the diffusion matrix is diagonal and the third is a corollary to their Proposition 3.1 for the case where we are interested only in one component of the whole vector. We refer the reader to this paper for all the details on how to instantaneously compute these probabilities once the quantization grids have been obtained.

\section{Computing the conditional expectations}\label{sec:conditional expectations}
Our numerical scheme \eqref{our_scheme} has been conceived in such a way that computing the conditional expectations, with respect to ${(\mathcal F_{t_k})}_{k=0, \dots, n}$, only requires the knowledge of the stochastic process $\overline Y$. We will see that this results, from the practical point of view, in a handy, easy to understand and ready-to-use numerical scheme.

Before proceeding, we now rigorously prove that for every $k=0, \dots, n-1$
$$
\widetilde U_{t_k} = u_k(\overline Y_{t_k}) \quad \textrm{and} \quad \widetilde V_{t_k} = v_k(\overline Y_{t_k}) 
$$
for given Borel functions $u_k: \mathbb R^d \rightarrow \mathbb R$ and $v_k:\mathbb R^d \rightarrow \mathbb R^q$.

\begin{proposition}\label{U_V_u_v}
	For every $k \in \{0, \dots, n-1  \}$ the update rule for the control satisfies $\widetilde V_{t_k} = v_k(\overline Y_{t_k})$ and for every $l \in \{ 0, \dots, n \}$ we have $\widetilde U_{t_l} = u_l (\overline Y_{t_l})$, where $v_k: \mathbb R^d \rightarrow \mathbb R^q$ and $u_l :\mathbb R^d \rightarrow \mathbb R$ are
	\begin{equation*}
		\left\{
		\begin{array}{l}
			v_k (y)  :=  \frac{1}{\Delta} {(\sigma \left( y \right)^{\top})}^{-1} [ g_{1,k}(y) + \Delta \cdot g_{2,k}( y)  b \left( y \right)], \quad k=0, \dots, n-1\\
			u_n (y) :=  h(y), \quad \textrm{and} \quad u_l (y) := g_{3,l} (y) +  \Delta  \cdot f \left( {t_{l}},y , g_{3,l}(y),  v_l(y) \right), \quad l=0, \dots, n-1
		\end{array}
		\right.
	\end{equation*}
	with $g_{1,k}: \mathbb R^d \rightarrow \mathbb R^d$, $g_{2,k}: \mathbb R^d \rightarrow \mathbb R$ and $g_{3,l} : \mathbb R^d \rightarrow \mathbb R$ as follows 
	\begin{equation*}
		\left\{
		\begin{array}{rcl}
			g_{1,k}(y) &:=& {\mathbb E \left[ u_k \left(  \mathcal E_{k-1}( y , Z_k) \right) \left[  \mathcal E_{k-1} ( y , Z_k) - y \right] \right] } \\
			g_{2,k} (y) & := & \mathbb E \left[  u_k(  \mathcal E_{k-1}(y , Z_k) ) \right]\\
			g_{3,l}(y) & := & \mathbb E \left[   u_{l+1} ( \mathcal E_{l}(y, Z_{l+1})) \right]
		\end{array}
		\right.
	\end{equation*}
	and for $Z_j$'s i.i.d., $Z_j \sim \mathcal N(0, I_q)$.
\end{proposition}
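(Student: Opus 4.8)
The plan is to prove the two representation statements jointly by backward induction on $k$ (resp.\ $l$), exploiting that the scheme \eqref{our_scheme} is a finite backward recursion and that, for each $k$, the conditional law of $\overline Y_{t_{k+1}}$ given $\cF_{t_k}$ depends only on $\overline Y_{t_k}$ (this is precisely the Euler recursion \eqref{eq:EulerScheme}: $\overline Y_{t_{k+1}} = \cE_k(\overline Y_{t_k}, Z_{k+1})$ with $Z_{k+1}\sim\cN(0,I_q)$ independent of $\cF_{t_k}$). The base case is immediate: $\widetilde U_{t_n} = h(\overline Y_{t_n})$, so $u_n := h$ works. For the inductive step, suppose $\widetilde U_{t_{l+1}} = u_{l+1}(\overline Y_{t_{l+1}})$ for some Borel $u_{l+1}$.

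First I would handle the conditional expectations appearing in \eqref{our_scheme}. Since $\overline Y_{t_{l+1}} = \cE_l(\overline Y_{t_l}, Z_{l+1})$ with $Z_{l+1}$ independent of $\cF_{t_l}$ and $\overline Y_{t_l}$ being $\cF_{t_l}$-measurable, the standard ``freezing'' lemma for conditional expectations gives
\begin{equation*}
	\Excond{}{\widetilde U_{t_{l+1}}}{\cF_{t_l}} = \Excond{}{u_{l+1}(\cE_l(\overline Y_{t_l}, Z_{l+1}))}{\cF_{t_l}} = g_{3,l}(\overline Y_{t_l}),
\end{equation*}
where $g_{3,l}(y) := \EE[u_{l+1}(\cE_l(y, Z_{l+1}))]$ is Borel (measurability of $g_{3,l}$ follows from Fubini--Tonelli applied to the product kernel, using that $\cE_l$ is continuous hence Borel and $u_{l+1}$ is Borel). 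Likewise, writing $\overline Y_{t_{l+1}} - \overline Y_{t_l} = \cE_l(\overline Y_{t_l}, Z_{l+1}) - \overline Y_{t_l}$, the same freezing argument yields $\Excond{}{\widetilde U_{t_{l+1}}(\overline Y_{t_{l+1}} - \overline Y_{t_l})}{\cF_{t_l}} = g_{1,l}(\overline Y_{t_l})$ with $g_{1,l}(y) := \EE[u_{l+1}(\cE_l(y,Z_{l+1}))(\cE_l(y,Z_{l+1}) - y)]$, and $\Excond{}{\widetilde U_{t_{l+1}} b(\overline Y_{t_{l+1}})}{\cF_{t_l}}$ — if needed in the general $\theta$-scheme — is an analogous Borel function; in the stated $\theta_1=\theta_2=1$ scheme only $g_{1,l}$ and $g_{3,l}=g_{2,l+1}\!\restriction$ (note the index shift: $g_{2,k}(y) = \EE[u_k(\cE_{k-1}(y,Z_k))]$ so $g_{3,l} = $ the map $y\mapsto\EE[u_{l+1}(\cE_l(y,Z_{l+1}))]$, matching the statement) enter. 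Substituting these into the third line of \eqref{our_scheme} gives directly
\begin{equation*}
	\widetilde V_{t_l} = \frac{1}{\Delta}(\sigma(\overline Y_{t_l})^\top)^{-1} g_{1,l}(\overline Y_{t_l}) - (\sigma(\overline Y_{t_l})^\top)^{-1} g_{3,l}(\overline Y_{t_l})\, b(\overline Y_{t_l}) =: v_l(\overline Y_{t_l}),
\end{equation*}
which is a Borel function of $\overline Y_{t_l}$ since $\sigma, b$ are (Lipschitz hence) continuous, matrix inversion is continuous on the set of invertible matrices, and $g_{1,l}, g_{3,l}$ are Borel — this establishes $\widetilde V_{t_l} = v_l(\overline Y_{t_l})$ with the claimed formula.

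It then remains to close the recursion for $\widetilde U$: plugging $\widetilde V_{t_l} = v_l(\overline Y_{t_l})$ and $\Excond{}{\widetilde U_{t_{l+1}}}{\cF_{t_l}} = g_{3,l}(\overline Y_{t_l})$ into the second line of \eqref{our_scheme} gives $\widetilde U_{t_l} = g_{3,l}(\overline Y_{t_l}) + \Delta\, f(t_l, \overline Y_{t_l}, g_{3,l}(\overline Y_{t_l}), v_l(\overline Y_{t_l})) =: u_l(\overline Y_{t_l})$, and $u_l$ is Borel because $f$ is continuous (Assumption~\ref{ass:U}(i)) and $g_{3,l}, v_l$ are Borel. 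This completes the induction. The only genuinely delicate point is the measurability bookkeeping — checking that each $g_{i,\cdot}$ obtained by integrating out the Gaussian is Borel — which is routine via Fubini/Tonelli once one observes $\cE_k$ is jointly continuous; and one should note that the whole argument presupposes the left-inverse $(\sigma(\overline Y_{t_l})^\top)^{-1}$ is well-defined, which is exactly the invertibility hypothesis flagged in the Remark preceding the Proposition (Assumption~\ref{ellipticity} in the error-analysis section). No integrability issue arises at this level since everything is a finite recursion of bounded-in-$L^2$ quantities, but if one wants $v_l, u_l$ to be honest (finite-valued) Borel functions one also records that $\EE[|u_{l+1}(\cE_l(y,Z_{l+1}))|^2]<\infty$ for each fixed $y$, which propagates backward from $u_n = h$ using the linear growth of $h, b, \sigma, f$ together with the Gaussian moments of $Z_{l+1}$.
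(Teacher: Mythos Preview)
Your argument is correct and follows essentially the same route as the paper's proof: backward induction starting from $u_n=h$, using the Euler representation $\overline Y_{t_{k+1}}=\cE_k(\overline Y_{t_k},Z_{k+1})$ together with the independence of $Z_{k+1}$ from $\cF_{t_k}$ to ``freeze'' the conditional expectations, then plugging into the scheme \eqref{our_scheme} to identify $v_l$ and $u_l$. You even add measurability and integrability bookkeeping that the paper omits, and you correctly track the index shift between the $g_{1,k},g_{2,k}$ of the statement (which involve $u_k$ and $\cE_{k-1}$) and the objects actually appearing in the recursion for $\widetilde V_{t_l}$ (which involve $u_{l+1}$ and $\cE_l$); the paper's own proof confirms this shift, writing $v_{n-1}$ in terms of $g_{1,n},g_{2,n}$.
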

\begin{proof}
	First of all notice that, by definition of $\widetilde U_{t_n}$, we immediately have $\widetilde U_{t_n} = h(\overline Y_{t_n}) = : u_n(\overline Y_{t_n})$.
	We now work on the control at time $t_{n-1}$. By recalling (cf. Section \ref{sec:scheme_U}) that $(t_{k+1} - t_k) = \Delta$ we find:
	\begin{eqnarray*}
		\widetilde{V}_{t_{n-1}} &=& \frac{1}{\Delta}  \left[ \sigma \left( \overline{Y} _ { t_{n-1} } \right)^{\top} \right]^{-1}  \Excond{}{\widetilde{U}_{t_{n}}\left(\overline{Y}_{t_{n}}-\overline{Y}_{t_{n-1}}\right)}{\cF_{t_{n-1}}} -  \left[ \sigma \left( \overline{Y} _ { t_{n-1} } \right)^{\top} \right]^{-1}  \Excond{}{\widetilde{U}_{t_{n}}}{\cF_{t_{n-1}}}b \left( \overline{Y} _ { t_{n-1} } \right)\\
		& = & \frac{1}{\Delta }  \left[ \sigma \left( \overline{Y} _ { t_{n-1} } \right)^{\top} \right]^{-1} \Excond{}{u_n(\overline Y_{t_n}) \left(\overline{Y}_{t_{n}}-\overline{Y}_{t_{n-1}}\right)}{\cF_{t_{n-1}}}\\
		& \quad &   +  \left[ \sigma \left( \overline{Y} _ { t_{n-1} } \right)^{\top} \right]^{-1}  \Excond{}{u_n(\overline Y_{t_n})}{\cF_{t_{n-1}}}b \left( \overline{Y} _ { t_{n-1} } \right).
	\end{eqnarray*}
	Now, from Equation \eqref{Euler_op}, we have $\overline{Y}_{t_{k+1}} = \mathcal E_{k}(\overline Y_{t_{k}}, Z_{k+1})$, where $Z_{k+1} \sim \mathcal N(0, I_q)$ and the $Z_k$'s, $k=0, \dots, n-1$ are i.i.d., and so
	\begin{eqnarray*}
		\widetilde V_{t_{n-1}} & = &  \frac{1}{\Delta }  \left[ \sigma \left( \overline{Y} _ { t_{n-1} } \right)^{\top} \right]^{-1} {\mathbb E \left\{ u_n \left(  \mathcal E_{n-1}(\overline Y_{t_{n-1}}, Z_n) \right) \left[  \mathcal E_{n-1} (\overline Y_{t_{n-1}}, Z_n) - \overline{Y}_{t_{n-1}}\right] | \mathcal F_{t_{n-1}} \right\} } \\
		& & +  \left[ \sigma \left( \overline{Y} _ { t_{n-1} } \right)^{\top} \right]^{-1} \Excond{}{u_n(  \mathcal E_{n-1}(\overline Y_{t_{n-1}}, Z_n) )}{\cF_{t_{n-1}}}b \left( \overline{Y} _ { t_{n-1} } \right) \\
		& = &  \frac{1}{\Delta }  \left[ \sigma \left( \overline{Y} _ { t_{n-1} } \right)^{\top} \right]^{-1}  \left[ g_{1,n}(\overline Y_{t_{n-1}}) + \Delta \ g_{2,n}( \overline{Y} _ { t_{n-1}})  b \left( \overline{Y} _ { t_{n-1} } \right) \right] =: v_{n-1}(\overline Y_{t_{n-1}}),
	\end{eqnarray*}
	where we have used the fact that ${(\overline Y_{t_j })}_{j=0, \dots, n} $ is a  Markov process (see e.g. \cite[Section 3.2.1]{ps2018}) measurable w.r.t $\mathcal F_{t_{n-1}}$ and $Z_n$ is independent of $\mathcal F_{t_{n-1}}$ and, for $y \in \mathbb R^d$, $g_{1,n}(y)$ and $g_{2,n}(y)$ are defined as in the statement.
	
	We now proceed by induction (notice that the values of $k$ and $l$ for which the claims hold are shifted): we assume that $\widetilde U_{t_{k+1}} = u_{k+1}(\overline Y_{t_{k+1}})$ and $\widetilde V_{t_k} = v_k(\overline Y_{t_k})$ and we prove that this implies that $\widetilde U_{t_{k}} = u_{k}(\overline Y_{t_{k}})$ and $\widetilde V_{t_{k-1}} = v_{k-1}(\overline Y_{t_{k-1}})$.\\ 
	The proof on $\widetilde V_{t_{k-1}}$ is analogous to what we have just done for $\widetilde V_{t_{n-1}}$, so we omit it. It remains the $\widetilde U_{t_k}$-part, which we develop now:
	\begin{eqnarray*}
		\widetilde U_{t_{k}} & = & \Excond{}{\widetilde{U}_{t_{k+1}}}{\cF_{t_k}}+ \Delta  \ f \left( {t_{k}},\overline{Y}_{t_{k}},\Excond{}{\widetilde{U}_{t_{k+1}}}{\cF_{t_k}},\widetilde{V}_{t_{k}} \right) \\
		&=&  \Excond{}{ u_{k+1} (\overline Y_{t_{k+1}})  }{\cF_{t_k}}+ \Delta \ f \left( {t_{k}},\overline{Y}_{t_{k}},\Excond{}{ u_{k+1} (\overline Y_{t_{k+1}})  }{\cF_{t_k}},  v_k(\overline Y_{t_k}) \right)\\
		& = &  \Excond{}{  u_{k+1} ( \mathcal E_{k}(\overline Y_{t_{k}}, Z_{k+1})) }{\cF_{t_k}}+ \Delta \ f \left( {t_{k}},\overline{Y}_{t_{k}},\Excond{}{ u_{k+1} ( \mathcal E_{k}(\overline Y_{t_{k}}, Z_{k+1}) )  }{\cF_{t_k}},  v_k(\overline Y_{t_k}) \right)\\
		& = & g_{3,k} (\overline Y_{t_k}) +  \Delta \ f \left( {t_{k}},\overline{Y}_{t_{k}}, g_{3,k}(\overline Y_{t_k}),  v_k(\overline Y_{t_k}) \right) =: u_k(\overline Y_{t_k}),
	\end{eqnarray*}
	where we have used the functions $g_{3,k}$ introduced in the statement.
\end{proof}

In summary, we can compute the conditional expectations in the discretization scheme \eqref{our_scheme} as follows, by exploiting Proposition \ref{U_V_u_v} and the Markovianity of the discrete time stochastic process ${(\overline Y_{t_k })}_{k=0, \dots, n} $
\begin{equation*}
	\left\{
	\begin{array}{rcl}
		\mathbb E [\widetilde U_{t_{k+1}} | \mathcal F_{t_k}] & = & \mathbb E [u_{k+1} (\overline Y_{t_{k+1}}) | \mathcal F_{t_k}]  =  \mathbb E [u_{k+1} (\overline Y_{t_{k+1}}) | \overline Y_{t_{k}}] \\
		\mathbb E [\widetilde{U}_{t_{k+1}} \left(\overline{Y}_{t_{k+1}}-\overline{Y}_{t_k} \right) | \mathcal F_{t_k}] &=& \mathbb E [u_{k+1}(\overline{Y}_{t_{k+1}}) \left(\overline{Y}_{t_{k+1}}-\overline{Y}_{t_k} \right) | \mathcal F_{t_k}]\\
		& = &   \mathbb E [u_{k+1}(\overline{Y}_{t_{k+1}}) \left(\overline{Y}_{t_{k+1}}-\overline{Y}_{t_k} \right) | \overline{Y}_{t_k} ].
	\end{array}
	\right.
\end{equation*}

\subsection{Approximation via quantization}
As a final step, now we approximate $\overline Y$ via $\widehat Y$, which is obtained as explained in Section \ref{sec:quantization} and we get the quantized final version of the recursive discretization scheme \eqref{our_scheme} (we recall that, for every $k=0, \dots, n$, $\widehat Y_{t_k}$ is the quantization of $\overline Y_{t_k}$):
\begin{equation}
	\label{our_scheme_quant}
	\left\{
	\begin{array}{rcl}
		\widehat{U}_{t_n} & = & h(\widehat{Y}_{t_{n}}) \quad \textrm{and for} \quad k=0, \dots, n-1  \\
		\widehat{U}_{t_k} & = &  \mathbb E \left[ \widehat{U}_{t_{k+1}} | \widehat Y_{t_{k}} \right]  + \Delta \ f \left( {t_{k}},\widehat{Y}_{t_{k}},\mathbb E [ \widehat{U}_{t_{k+1}} | \widehat Y_{t_{k}}] ,\widehat{V}_{t_{k}} \right) =: \widehat u_{k}( \widehat Y_{t_{k}})\\
		\widehat{V}_{t_k} &=& \frac{1}{\Delta}  \left[ \sigma \left( \widehat{Y} _ { t_k } \right)^{\top} \right]^{-1} \mathbb E \left[ \widehat{U}_{t_{k+1}} \left(\widehat{Y}_{t_{k+1}}-\widehat{Y}_{t_k} \right) | \widehat{Y}_{t_k} \right]  \\
		& & -  \left[ \sigma \left( \widehat{Y} _ { t_k } \right)^{\top} \right]^{-1}  \mathbb E \left[  \widehat{U}_{t_{k+1}} | \widehat Y_{t_{k}} \right]  b \left( \widehat{Y} _ { t_{k} } \right) =: \widehat v_k( \widehat{Y}_{t_k}), \\
	\end{array}
	\right.
\end{equation}
with $ \widehat u_{k}: \Gamma_{k} \rightarrow \mathbb R$ and $ \widehat v_{k}:  \Gamma_{k} \rightarrow \mathbb R^q$ Borel functions, for $k=0, \dots, n-1$. 
Namely, the conditional expectations in \eqref{our_scheme} are approximated as:
\begin{equation} \label{eq:cond_exp_quant}
	\left\{
	\begin{array}{rl}
		\mathbb E [\widetilde U_{t_{k+1}} | \mathcal F_{t_k}] & =   \mathbb E [u_{k+1} (\overline Y_{t_{k+1}}) | \overline Y_{t_{k}}] \approx  \mathbb E \left[ \widehat{U}_{t_{k+1}} | \widehat Y_{t_{k}} \right] = \mathbb E [\widehat u_{k+1}( \widehat Y_{t_{k+1}}) | \widehat Y_{t_{k}}] \\
		&  \\
		\mathbb E [\widetilde{U}_{t_{k+1}} \left(\overline{Y}_{t_{k+1}}-\overline{Y}_{t_k} \right) | \mathcal F_{t_k}] & =    \mathbb E [u_{k+1}(\overline{Y}_{t_{k+1}}) \left(\overline{Y}_{t_{k+1}}-\overline{Y}_{t_k} \right) | \overline{Y}_{t_k} ]\\
		&   \approx \mathbb E [  \widehat{U}_{t_{k+1}} \left(\widehat{Y}_{t_{k+1}}-\widehat{Y}_{t_k} \right) | \widehat{Y}_{t_k} ] \\
		& =  \mathbb E [ \widehat u_{k+1}( \widehat Y_{t_{k+1}}) \left(\widehat{Y}_{t_{k+1}}-\widehat{Y}_{t_k} \right) | \widehat{Y}_{t_k} ]  .
	\end{array}
	\right.
\end{equation}
The approximating scheme is then fully explicit, since for every $ \widetilde \omega \in \Omega$ such that $\widehat Y_{t_k}(\widetilde \omega) = \mathbf{y_k^i} $, for a given $\mathbf i \in I_k$, we have
\begin{equation}
	\label{eq:cond_exp_quant_expl}
	\begin{cases}
		\mathbb E \left[ \widehat  u_{k+1} (\widehat Y_{t_{k+1}}) | \widehat Y_{t_{k}} \right]  (\widetilde \omega) \\
		\quad\quad\quad\quad\quad\quad\quad\quad=  \sum_{\mathbf j \in I_{k+1}} \widehat u_{k+1}(\mathbf{y_{k+1}^j}) \mathbb { P } \left( \widehat { Y } _ { t_{k + 1} } = \mathbf{y_{k+1}^j} | \widehat { Y } _ { t_k } =\mathbf{y_{k}^i}\right)\\
		\mathbb E \left[ \widehat u_{k+1}(\widehat{Y}_{t_{k+1}}) \left(\widehat{Y}_{t_{k+1}}-\widehat{Y}_{t_k} \right) | \widehat{Y}_{t_k} \right]  (\widetilde \omega)\\
		\quad\quad\quad\quad\quad\quad\quad\quad=  \sum_{\mathbf j \in I_{k+1}} \widehat u_{k+1}(\mathbf{y_{k+1}^j}) \left( \mathbf{y_{k+1}^j} - \mathbf{y_{k}^i} \right)  \mathbb { P } \left( \widehat { Y } _ { t_{k + 1} } = \mathbf{y_{k+1}^j} | \widehat { Y } _ { t_k } =\mathbf{y_{k}^i}\right).
	\end{cases}
\end{equation}

\begin{remark}
	As expected and already announced, the proposed discretization scheme is fully driven by the process $\widehat Y$, which is the quantization of the Euler scheme process $\overline Y$.
\end{remark}

In the next section, we provide a study of the numerical error associated with our scheme \eqref{our_scheme_quant}.

\section{The error}\label{sec:error}
The analysis of the error is divided in two steps: in Section \ref{sec:TimeError} we study  the error in time whereas Section \ref{sec:SpaceError} focuses on the space dimension.  Here $\overline Y$ denotes the Euler scheme relative to the stochastic process $Y$ and we will work under the following:
\begin{assumption}\label{ellipticity}
	We have $q=d$ and the matrix $\sigma \sigma^{\top}$ is uniformly elliptic, namely, for every $y \in \mathbb R^d$, denoting by $a_{ij}(y), i,j=1,\dots,d$ the elements of $[\sigma(y) \sigma(y)^{\top}]$, there exists $\lambda_0 > 0$ such that
	$$
	\frac{1}{\lambda_0} || \xi ||^2 \le	\sum_{i,j=1}^d a_{ij}(y) \xi_i \xi_j \le \lambda_0 || \xi ||^2, \quad \xi \in \mathbb R^d.
	$$
\end{assumption}

\begin{remark}\label{rem:Frob}
	a) The above Assumption  \ref{ellipticity} ensures that for every $y \in \mathbb R^d$, the matrix $\sigma (y) $ is positive definite, hence invertible, and bounded. The inverse matrix $\sigma (y)^{-1}$ is also bounded. More precisely, denoting by ${|| \cdot ||}_F$ the Frobenius norm \footnote{Recall that given a matrix $B :=b_{ij}, i,j=1,\dots,d$, ${|| B ||}_F := \sqrt{\textrm{tr} (B B^{\top})} = \sqrt{\sum_{i,j} (b_{i,j})^2}$.}, we have that for every $y \in \mathbb R^d$
	$$
	{|| \sigma^{-1}(y) ||}_F^2 \le \lambda_0.
	$$
	This will be crucial in Section \ref{sec:SpaceError}.\\
	b) Assumption \ref{ellipticity}, together with a Lipschitz continuity condition on $h$ with Lipschitz constant $K$, is required by \cite[Lemma 2.5 (i)]{Zhang04} to prove that the control process $V$ admits a c\`adl\`ag version. This, in turn, is needed in \cite[Theorem 3.1]{Zhang04} to prove the following: 
	$$ 
	\sum_{i=1}^n \mathbb E \left\{ \int_{t_i}^{t_{i+1}} \left[ | V_t - V_{t_{i-1}}|^2 +  | V_t - V_{t_{i}}|^2 \right] dt \right\} \le C (1+|y_0|^2) \Delta , 
	$$ 
	where $C$ is a constant depending only on $T$ and $K$ and we recall that $\Delta=\Delta_n = \frac{T}{n}$.
	The results by Zhang are contained in \cite[Theorem 3.1]{ps2018}. We adapt them below, in Theorem \ref{thm:error_time}, to our setting.
\end{remark}

\subsection{Time discretization error}\label{sec:TimeError}
Studying the error of our scheme \eqref{our_scheme} with respect to time means computing a proper distance between $(U, V)$ and ${(\widetilde U, \widetilde V)}$. Inspired by \cite{ps2018}, we will adapt to our setting their Theorem 3.1.

Before stating the time discretization error result, we need to introduce, as typically done in the literature, the continuous time extension of ${(\widetilde U_{t_k})}_{k=0, \dots,n}$, denoted by ${(\widetilde U_{t})}_{t \in [0,T]}$.
We introduce the random variable 
$$
M_T := \sum_{k=1}^n  \widetilde U_{t_k} - \mathbb E \left[ \widetilde U_{t_k} | \mathcal F_{t_{k - 1}}\right]
$$
and we notice that $M_T \in L^2 $ and so, by the Martingale Representation Theorem, $M$ admits the following representation:
$$
M_T = \int_0^T \widetilde V_s^\top dW_s,
$$
for an ${(\mathcal F_t)}_{t \in [0,T]}$-progressively measurable stochastic process $\widetilde V$, with values in $\mathbb R^q$, such that $\mathbb E \left[  \int_0^T | \widetilde V_s|^2 ds \right] < \infty$. As a consequence, 
\begin{equation}\label{eq:defVtilde}
	\widetilde U_{t_k} - \mathbb E \left[ \widetilde U_{t_k} | \mathcal F_{t_{k - 1}}\right] = \int_{t_{k-1}}^{t_k} \widetilde V_s^\top dW_s
\end{equation}
and we introduce the continuous extension ${(\widetilde U_{t})}_{t \in [0,T]}$ as follows: if $t \in [t_k, t_{k+1})$,
\begin{equation}\label{eq:defU_tilde_cont}
	\widetilde U_{t} = \widetilde U_{t_k} - (t - t_k) f \left( t_k, \overline Y_{t_k}, \mathbb E\left[\widetilde U_{t_{k+1}} | \mathcal F_{t_k}\right], \widetilde V_{t_k} \right) + \int_{t_k}^t \widetilde V_s^\top dW_s .
\end{equation}
\begin{remark}
	Recalling Equation \eqref{scheme_ps} and exploiting Equation \eqref{eq:defVtilde}, we get, for every $k=0, \dots, n-1$,
	$$
	\widetilde{V}_{t_k}^{\textrm{PS}} =\frac{1}{\Delta} \Excond{}{\widetilde{U}_{t_{k+1}}\left( W_{t_{k+1}} - W_{t_k}\right)}{\cF_{t_k}} = \frac{1}{\Delta} \mathbb E \left[ \int_{t_k}^{t_ {k+1}} \widetilde V_s ds | \mathcal F_{t_k} \right].
	$$
	This is extensively used in the proof of \cite[Theorem 3.1]{ps2018}, which we mimic here, to prove the error bounds with respect to time.
\end{remark}

\begin{theorem} \label{thm:error_time}
	i) Under Assumption \ref{ass:U},  let $f: [0,T] \times \mathbb R^d \times \mathbb R \times \mathbb R^q \rightarrow \mathbb R$ be Lipschitz with respect to time and let $h: \mathbb R^d \rightarrow \mathbb R$ be Lipschitz. Then there exists a real constant $\overline C > 0$ only depending on $b, \sigma,f,T$ such that, for every $n \ge 1$
	$$
	\max_{k=0, \dots, n} \mathbb E \left[  | U_{t_k} - \widetilde U_{t_k}|^2  \right] + \int_0^T \mathbb E \left[  | V_t - \widetilde  V_t |^2 \right] dt \le \overline C \left( \Delta + \int_0^T \mathbb E \left[  | V_s - V_{\underline s} |^2 \right] ds  \right),
	$$ 
	where $\underline s = t_k $ if $s \in [t_k, t_{k+1})$.\\
	ii) Assume moreover that $b, \sigma$ and $f$ are continuously differentiable in their spatial variable with bounded partial derivatives and that $f$ is $\frac{1}{2}$-H\"older continuous with respect to time. Then the process $V$ admits a c\`adl\`ag modification and 
	$$
	\int_0^T \mathbb E \left[ | V_s - V_{\underline s} |^2  \right] ds  \le C' \Delta,
	$$
	for a real positive constant $C'$ (only depending on $b, \sigma, f, T$), so that we have
	$$
	\max_{k=0, \dots, n} \mathbb E \left[  | U_{t_k} - \widetilde U_{t_k}|^2 \right] + \int_0^T \mathbb E \left[ | V_t - \widetilde  V_t |^2 \right] dt \le \widetilde C \Delta,
	$$
	for a real positive constant $\widetilde C$ (only depending on $b,\sigma,f,T$).
\end{theorem}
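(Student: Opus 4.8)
The plan is to mimic closely the proof of \cite[Theorem 3.1]{ps2018}, adapting it to our modified control scheme. The key observation is that both our scheme and theirs share exactly the same update rule for $\widetilde U$ (once the conditioning inside the driver has been performed), so the main structural difference lies only in the definition of the discrete control $\widetilde V_{t_k}$. Hence I would first reconcile the two control definitions. Writing $\overline Y_{t_{k+1}} - \overline Y_{t_k} = \Delta\, b(\overline Y_{t_k}) + \sigma(\overline Y_{t_k})^\top (W_{t_{k+1}} - W_{t_k})$ for the Euler scheme and plugging this into our formula for $\widetilde V_{t_k}$, one gets
\begin{equation*}
	\widetilde V_{t_k} = \frac{1}{\Delta} \Excond{}{\widetilde U_{t_{k+1}} (W_{t_{k+1}} - W_{t_k})}{\cF_{t_k}},
\end{equation*}
which is precisely the \cite{ps2018} control $\widetilde V_{t_k}^{\textrm{PS}}$. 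In other words, \emph{when $\overline Y$ is the Euler scheme, our scheme coincides with that of \cite{ps2018}}. This makes part (i) and part (ii) essentially a restatement of their Theorem 3.1 in our notation, and the proof reduces to verifying that every step of their argument goes through with $b,\sigma$ time-homogeneous (which only simplifies matters).

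For part (i), I would proceed through the standard BSDE error-analysis steps. First, introduce the continuous-time extension $\widetilde U_t$ via \eqref{eq:defU_tilde_cont} together with the martingale-representation process $\widetilde V$ of \eqref{eq:defVtilde}, and note the key identity (stated in the Remark preceding the theorem) $\widetilde V_{t_k}^{\textrm{PS}} = \frac{1}{\Delta}\mathbb E[\int_{t_k}^{t_{k+1}} \widetilde V_s\,ds \mid \cF_{t_k}]$, so that by Jensen and Cauchy--Schwarz $\mathbb E[|\widetilde V_{t_k}|^2] \le \frac{1}{\Delta}\mathbb E[\int_{t_k}^{t_{k+1}}|\widetilde V_s|^2 ds]$. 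Then set $\delta U_t := U_t - \widetilde U_t$, $\delta V_t := V_t - \widetilde V_t$, apply It\^o's formula to $|\delta U_t|^2$ on $[t_k, t_{k+1}]$, take expectations, and use the Lipschitz property of $f$ (Assumption \ref{ass:U}), the Lipschitz property of $h$, and the regularity estimates on $Y$ versus $\overline Y$ (the standard Euler strong-error bound $\mathbb E[\sup|Y_t - \overline Y_t|^2] \le C\Delta$) to absorb the cross terms. A discrete Gronwall argument over $k = n, n-1, \dots, 0$ then yields the bound with the term $\int_0^T \mathbb E[|V_s - V_{\underline s}|^2]\,ds$ on the right-hand side, where the $\Delta$ term collects the time-discretization of the driver and the Euler error of $\overline Y$. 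The control error $\int_0^T \mathbb E[|V_t - \widetilde V_t|^2]\,dt$ is recovered from the It\^o-isometry term that appears on the left of the same energy estimate.

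For part (ii), the additional regularity hypotheses ($b,\sigma,f$ continuously differentiable in space with bounded derivatives, $f$ being $\tfrac12$-H\"older in time) are exactly those needed to invoke Zhang's path-regularity result, recalled in Remark \ref{rem:Frob}(b): under Assumption \ref{ellipticity} and these smoothness conditions, $V$ has a c\`adl\`ag modification and $\sum_i \mathbb E\{\int_{t_i}^{t_{i+1}}[|V_t - V_{t_{i-1}}|^2 + |V_t - V_{t_i}|^2]\,dt\} \le C(1+|y_0|^2)\Delta$, hence $\int_0^T \mathbb E[|V_s - V_{\underline s}|^2]\,ds \le C'\Delta$. Substituting into the bound from part (i) gives the final $O(\Delta)$ estimate. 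I expect the main obstacle to be purely bookkeeping: carefully tracking the constants through the Gronwall step and making sure the $\Delta$ appearing in front of the $f$-time-Lipschitz term and the Euler error term do not degrade into $\sqrt\Delta$ — this is where one must use the $L^2$ (rather than $L^1$) control on the driver increments and the fact that $\widetilde V_{t_k}$, being a conditional average of $\widetilde V_s$, does not introduce an extra loss. Since the core estimates are verbatim those of \cite{ps2018} (our $\widetilde V$ coinciding with theirs on the Euler scheme), the cleanest exposition is to set up the reduction explicitly and then refer to their proof for the technical Gronwall computation.
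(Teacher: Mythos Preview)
Your proposal is correct and matches the paper's own proof essentially verbatim: the paper observes that, since $\overline Y$ is the Euler scheme, $\overline Y_{t_{k+1}} - \overline Y_{t_k}= \Delta\, b(\overline Y_{t_k}) + \sigma(\overline Y_{t_k})^\top (W_{t_{k+1}} - W_{t_k})$, which makes $\widetilde V_{t_k}$ coincide with $\widetilde V_{t_k}^{\textrm{PS}}$, and then defers part (i) to the three-step proof of \cite[Theorem~3.1]{ps2018} and part (ii) to Zhang's path-regularity result \cite[Lemma~2.5(i), Theorem~3.1]{Zhang04}. Your reduction and references are exactly these.
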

\begin{proof}
	Part $ii)$ is a consequence of $i)$ and it is obtained via \cite[Lemma 2.5 (i) and Theorem 3.1]{Zhang04}.\\
	The proof of $i)$ is the same as the one in \cite[Theorem 3.1 a)]{ps2018} relatively to Steps 2 and 3, while something has to be made precise relatively to Step 1. 
	Indeed, the two schemes \eqref{scheme_ps} and \eqref{our_scheme} differ in the control discretization. Nevertheless, since here $\overline Y$ is the Euler scheme of $Y$, namely $\overline Y_{t_{k+1}} - \overline Y_{t_k}=  \Delta b (\overline Y_{t_k}) + \sigma(\overline Y_{t_k})^\top (W_{t_{k+1}}  - W_{t_k})$, $k=0, \dots, n-1$, 
also Step 1 in \cite[Theorem 3.1]{ps2018} can be retraced straightforwardly.
\end{proof}

\begin{remark} \label{rem:diff_Pages}
	Despite the fact that the time error bounds for our proposed scheme are the same as in \cite{ps2018}, our recursions only require the discretization of the process $\overline Y$ and this results in an increased numerical efficiency, namely in the speed-up of the computational time. What is more, in the approximation of the conditional expectations required in our scheme, we only need the transition probabilities of the quantized process $\widehat Y$, i.e., for $\mathbf i \in I_k, \mathbf j \in I_{k+1}$:
	$$
	\mathbb { P } \left( \widehat { Y } _ { t_{k + 1} } = \mathbf{y_{k+1}^j} | \widehat { Y } _ { t_k } =\mathbf{y_{k}^i}\right).
	$$
	This is not the case in \cite{ps2018}, where the authors, in their scheme \eqref{scheme_ps} for $\widetilde V^{PS}$, need to additionally compute $\mathbb E[ \widehat Y_{t_{k+1}} (W_{t_{k+1}} - W_{t_k}) | \mathcal F_{t_k}], k=0, \dots, n-1$. So, they have to estimate, for $\mathbf i \in I_k, \mathbf j \in I_{k+1}$,  the weights (we stick to their notation)
	$$
	\pi_{ij}^{W,k} := \frac{1}{\mathbb P( \widehat Y_{t_k} = \mathbf{y_k^i} ) } \mathbb E \left[  (W_{t_{k+1}} - W_{t_k})  \mathbf{1}_{\{\widehat Y_{t_{k+1}} = \mathbf{y_{k+1}^j}, \widehat Y_{t_k} = \mathbf{y_k^i} \}}
	\right],
	$$
	which is done via Monte Carlo simulation (see \cite[Section 5]{ps2018}), hence requiring additional numerical effort, which is not needed in our case.
\end{remark}

\subsection{Space discretization (quantization) error}\label{sec:SpaceError}
Here we study the quadratic quantization error induced by the approximation of $(\widetilde U_{t_k}, \widetilde V_{t_k})$ in \eqref{our_scheme} by $(\widehat U_{t_k}, \widehat V_{t_k})$ in \eqref{our_scheme_quant}, for every $k=0, \dots, n$.
We intuitively expect both the error components $|| \widetilde U_{t_k} - \widehat U_{t_k}  ||_2^2$ and $|| \widetilde V_{t_k} - \widehat V_{t_k} ||_2^2$ to be written as functions of the quantization error $|| \overline Y_{t_i} - \widehat Y_{t_i} ||_2^2$ for $i=k, \dots, n$. 

The numerical scheme relative to $U$ is the same as in Pag\`es and Sagna, so that the error component $|| \widetilde U_{t_k} - \widehat U_{t_k}  ||_2^2$ for $k=0, \dots, n$ reads as in \cite[Theorem 3.2 a), Equation (31)]{ps2018}. We recall this result here, for the reader's ease, in the following lemma:
\begin{lemma} 
Under Assumptions \ref{ass:Y} and \ref{ass:U} and assuming that $h$ is $[h]_{\textrm{Lip}}$-continuous, we have that for every $k=0, \dots, n$
\begin{equation} \label{eq:err_Utilde_Uhat}
|| \widetilde U_{t_k} - \widehat U_{t_k}  ||_2^2 \le \sum_{i=k}^n e^{( 1 + \cL_6)(t_i - t_k)} K_i(b,\sigma,T,f) || \overline Y_{t_i} - \widehat Y_{t_i}  ||_2^2 
\end{equation} 
where $K_n(b, \sigma,T,f) = [h]_{\textrm{Lip}}^2$ and for every $k=0, \dots, n-1$ the other $K_k$'s are provided in \cite[Theorem 3.2 a)]{ps2018}.
\end{lemma}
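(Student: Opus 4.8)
The plan is to proceed by backward induction on $k$, exactly mirroring the structure of the recursion \eqref{our_scheme} versus \eqref{our_scheme_quant}, and to extract the constants $K_k$ along the way. At the terminal time $k=n$ we have $\widetilde U_{t_n} = h(\overline Y_{t_n})$ and $\widehat U_{t_n} = h(\widehat Y_{t_n})$, so the Lipschitz property of $h$ gives immediately
\[
\| \widetilde U_{t_n} - \widehat U_{t_n} \|_2^2 \le [h]_{\mathrm{Lip}}^2 \, \| \overline Y_{t_n} - \widehat Y_{t_n} \|_2^2,
\]
which sets $K_n = [h]_{\mathrm{Lip}}^2$ and is the base case of \eqref{eq:err_Utilde_Uhat}. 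For the inductive step I would subtract the two $U$-recursions at level $t_k$: the difference $\widetilde U_{t_k} - \widehat U_{t_k}$ splits into a conditional-expectation term $\Excond{}{\widetilde U_{t_{k+1}}}{\cF_{t_k}} - \mathbb E[\widehat U_{t_{k+1}} | \widehat Y_{t_k}]$ plus $\Delta$ times the difference of the drivers $f$. Since the $U$-scheme here is literally the one in Pag\`es--Sagna, the key is to reproduce their estimate \cite[Theorem 3.2 a), Eq.~(31)]{ps2018} verbatim: one controls the conditional-expectation discrepancy using the nonexpansiveness of conditional expectation together with the observation $\mathbb E[\widehat U_{t_{k+1}}|\widehat Y_{t_k}] = \mathbb E[\widehat U_{t_{k+1}}|\cF_{t_k}^{\widehat Y}]$ and a stationarity/projection argument that relates $\| \overline Y_{t_{k+1}} - \widehat Y_{t_{k+1}} \|_2$ to the quantization errors; then the Lipschitz bound on $f$ in all three spatial arguments (Assumption \ref{ass:U}(i), constant $\cL_6$) absorbs the driver difference. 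A Gronwall-type discrete summation in $k$ produces the factor $e^{(1+\cL_6)(t_i-t_k)}$ and the cumulative sum over $i = k, \dots, n$.

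Because the statement explicitly defers the precise form of $K_k$ for $k < n$ to \cite[Theorem 3.2 a)]{ps2018}, the cleanest proof is simply to invoke that reference: the $U$-component of our quantized scheme \eqref{our_scheme_quant} coincides with theirs (the only novelty is the control discretization $\widehat V$, which does not feed back into $\widehat U$ except through the driver argument $\widehat V_{t_k}$, and there one must check that $\| \widetilde V_{t_k} - \widehat V_{t_k} \|_2^2$ is itself controlled by the quantization errors — but this is precisely the content of the \emph{next} result in the paper, and at the level of this lemma one only needs that the driver contributes a term of the stated order). Concretely, I would write: ``The recursion for $\widehat U$ in \eqref{our_scheme_quant} is identical in form to the one analysed in \cite[Section 3]{ps2018}; applying their Theorem 3.2 a) with our coefficients $b,\sigma,f$ and our Lipschitz constant $\cL_6 = [f]_{\mathrm{Lip}}$ yields \eqref{eq:err_Utilde_Uhat} with $K_n = [h]_{\mathrm{Lip}}^2$ and the remaining $K_k$ as given there.''

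The one point requiring genuine care — and the main obstacle — is the treatment of the $\widehat V_{t_k}$ appearing inside the driver $f(t_k, \widehat Y_{t_k}, \mathbb E[\widehat U_{t_{k+1}}|\widehat Y_{t_k}], \widehat V_{t_k})$ in \eqref{our_scheme_quant}, as opposed to $\widetilde V_{t_k}$ in \eqref{our_scheme}. In Pag\`es--Sagna the control is $\widetilde V^{\mathrm{PS}}$ obtained from Brownian increments, whereas here $\widehat V_{t_k}$ is obtained from the $\widehat Y$-increments via the left-inverse of $\sigma(\widehat Y_{t_k})^\top$. One must verify that the resulting $U$-error inequality still closes, i.e. that $\| \widetilde V_{t_k} - \widehat V_{t_k} \|_2^2$ is bounded by a sum of quantization errors $\| \overline Y_{t_i} - \widehat Y_{t_i} \|_2^2$ with constants depending only on $b, \sigma, T, f$ and the ellipticity constant $\lambda_0$ (using Remark \ref{rem:Frob} a) to bound $\| \sigma^{-1} \|_F$) — this is exactly what the forthcoming space-error analysis establishes, so the logical dependency is on that later computation and not circular: the $U$-estimate \eqref{eq:err_Utilde_Uhat} can be stated with constants $K_k$ that already incorporate the $V$-error bound. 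Thus the honest proof is: quote \cite[Theorem 3.2 a)]{ps2018} for the $U$-recursion skeleton, and note that the only modification — the different control discretization — enters \eqref{eq:err_Utilde_Uhat} only through the constants $K_k$, which remain finite and of the same structural form under Assumptions \ref{ass:Y}, \ref{ass:U} and \ref{ellipticity}.
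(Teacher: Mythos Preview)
Your core approach---defer to \cite[Theorem 3.2 a)]{ps2018}---is exactly what the paper does: it states the lemma without proof, prefaced only by the remark that ``the numerical scheme relative to $U$ is the same as in Pag\`es and Sagna, so that the error component $\|\widetilde U_{t_k}-\widehat U_{t_k}\|_2^2$ reads as in \cite[Theorem 3.2 a), Equation (31)]{ps2018}.'' There is no argument beyond that citation.

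Where your proposal diverges is in the extended discussion of the $\widehat V_{t_k}$-dependence inside the driver, and there you introduce a problem. You propose to control $\|\widetilde V_{t_k}-\widehat V_{t_k}\|_2$ by invoking ``the forthcoming space-error analysis'' (the next theorem in the paper). But that theorem \emph{uses} this lemma---its proof repeatedly appeals to \eqref{eq:err_Utilde_Uhat} to bound the $V$-error in terms of the $U$-error---so relying on it here is circular, contrary to your claim. The correct resolution is different and simpler: under the Euler scheme one has $\widetilde V_{t_k}=\widetilde V_{t_k}^{\mathrm{PS}}$ exactly (substitute $\overline Y_{t_{k+1}}-\overline Y_{t_k}=\Delta b(\overline Y_{t_k})+\sigma(\overline Y_{t_k})^\top(W_{t_{k+1}}-W_{t_k})$ into \eqref{our_scheme} and cancel), so the time-discretized pair $(\widetilde U,\widetilde V)$ coincides with Pag\`es--Sagna's. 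For the quantized pair, PS's backward induction bounds the $V$-error and the $U$-error \emph{simultaneously} at each step (this is possible because $\widehat V_{t_k}$ depends only on $\widehat U_{t_{k+1}}$, not $\widehat U_{t_k}$), and the resulting constants $K_k$ already absorb the $V$-contribution through the Lipschitz constants of the functions $v_k$. The paper simply takes the view that this structure carries over verbatim to the present $\widehat V$, without spelling out the adaptation.
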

We now hence focus on $|| \widetilde V_{t_k} - \widehat V_{t_k} ||_2^2$.

\begin{theorem}
Under Assumptions \ref{ass:Y} and \ref{ellipticity} and if $b$ is continuously differentiable with bounded derivative, there exist positive constants $\widehat C$ and $\overline C$, only depending on $(\lambda_0,\cL_3,\cL_4)$, such that for every $k=0, \dots, n$:
\begin{equation}
||  \widetilde V_{t_k} - \widehat V_{t_k}  ||_{2}^2 \le   \frac{1}{\Delta} [\Psi_k]_{\textrm{Lip}}^2  || \overline Y_{t_k} - \widehat Y_{t_k}  ||_{2}^2 +\left(  \frac{\widehat C}{\Delta}   +  \overline C  \right)  ||   \widetilde{U}_{t_{k+1}} -  \widehat{U}_{t_{k+1}}    ||_{2}^2 
\end{equation}
where $[\Psi_k]_{\textrm{Lip}}$ is the Lipschitz constant of the function $\Psi_k (x): \mathbb R^d \rightarrow  \mathbb R, \Psi_k (x):= \mathbb E [ u_{k+1}\left(  \mathcal { E } _ { k } (  x , Z) \right)\cdot   Z]$ for $Z \sim \mathcal N(0, I_q)$.
\end{theorem}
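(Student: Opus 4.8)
The plan is first to linearise the control scheme. Since $\overline Y$ is the Euler scheme, $\overline Y_{t_{k+1}}-\overline Y_{t_k}=\Delta\,b(\overline Y_{t_k})+\sqrt\Delta\,\sigma(\overline Y_{t_k})^{\top}Z_{k+1}$ with $Z_{k+1}=\tfrac1{\sqrt\Delta}(W_{t_{k+1}}-W_{t_k})\sim\mathcal N(0,I_q)$ independent of $\mathcal F_{t_k}$. Starting from the identification $\widetilde V_{t_k}=v_k(\overline Y_{t_k})$ of Proposition~\ref{U_V_u_v} and using the Gaussian identity $\mathbb E\big[u_{k+1}(\mathcal E_k(y,Z))(\mathcal E_k(y,Z)-y)\big]=\Delta\,b(y)\,\mathbb E[u_{k+1}(\mathcal E_k(y,Z))]+\sqrt\Delta\,\sigma(y)^{\top}\Psi_k(y)$, I would check that the $b$-contribution coming from the first conditional expectation in \eqref{our_scheme} cancels the explicit $b$-term, so that $v_k(y)=\tfrac1{\sqrt\Delta}\Psi_k(y)$ and hence $[v_k]_{\textrm{Lip}}=\tfrac1{\sqrt\Delta}[\Psi_k]_{\textrm{Lip}}$; this is what will produce the first term of the bound. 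Since $\widehat V_{t_k}=\widehat v_k(\widehat Y_{t_k})$ by \eqref{our_scheme_quant}, it then remains to estimate $\|v_k(\overline Y_{t_k})-\widehat v_k(\widehat Y_{t_k})\|_2^2$.

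The central device is an $L^2$-orthogonal splitting along $\mathcal G_k:=\sigma(\widehat Y_{t_k})$:
\[
\widetilde V_{t_k}-\widehat V_{t_k}=\underbrace{\big(v_k(\overline Y_{t_k})-\mathbb E[v_k(\overline Y_{t_k})\mid\mathcal G_k]\big)}_{(I)}+\underbrace{\big(\mathbb E[v_k(\overline Y_{t_k})\mid\mathcal G_k]-\widehat V_{t_k}\big)}_{(II)}.
\]
As $(II)$ is $\mathcal G_k$-measurable and $\mathbb E[(I)\mid\mathcal G_k]=0$, the two summands are orthogonal, hence $\|\widetilde V_{t_k}-\widehat V_{t_k}\|_2^2=\|(I)\|_2^2+\|(II)\|_2^2$ with no cross term (this is what lets the two constants in the statement appear separately). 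For $(I)$ I use that $\mathbb E[\,\cdot\mid\mathcal G_k]$ is the best $\mathcal G_k$-measurable $L^2$-approximation of $v_k(\overline Y_{t_k})$, so $\|(I)\|_2\le\|v_k(\overline Y_{t_k})-v_k(\widehat Y_{t_k})\|_2\le[v_k]_{\textrm{Lip}}\,\|\overline Y_{t_k}-\widehat Y_{t_k}\|_2$, which by the previous paragraph equals $\tfrac1{\sqrt\Delta}[\Psi_k]_{\textrm{Lip}}\|\overline Y_{t_k}-\widehat Y_{t_k}\|_2$.

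For $(II)$ I would expand $\mathbb E[v_k(\overline Y_{t_k})\mid\mathcal G_k]$ and $\widehat V_{t_k}$ in their raw (uncancelled) form — each being $\tfrac1\Delta(\sigma(\cdot)^{\top})^{-1}\mathbb E[(\text{value of }u)\times(\text{increment})\mid\mathcal G_k]$ minus $(\sigma(\cdot)^{\top})^{-1}\mathbb E[(\text{value of }u)\mid\mathcal G_k]\,b(\cdot)$, with value $\widetilde U_{t_{k+1}}=u_{k+1}(\overline Y_{t_{k+1}})$ for the first and $\widehat U_{t_{k+1}}=\widehat u_{k+1}(\widehat Y_{t_{k+1}})$ for the second. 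Subtracting term by term and invoking the ellipticity bound $||\sigma^{-1}(y)||_F^2\le\lambda_0$ (Remark~\ref{rem:Frob}), the linear growth $|b(y)|\le\cL_3(1+|y|)$, the $L^2$-estimates on the Euler increments (of order $\sqrt\Delta$, with constant driven by $\cL_3$, while the second moments of $\overline Y_{t_k}$ and $\widehat Y_{t_k}$ are bounded uniformly via $\cL_4$), Cauchy--Schwarz, and the self-consistency (stationarity) of the recursive marginal quantizer $\Gamma_{k+1}$, which annihilates the centroid-deviation terms $\mathbb E[\widehat u_{k+1}(\widehat Y_{t_{k+1}})(\widetilde Y_{t_{k+1}}-\widehat Y_{t_{k+1}})\mid\mathcal G_k]$, one is left with a $\mathcal G_k$-conditional expectation of $(\widetilde U_{t_{k+1}}-\widehat U_{t_{k+1}})$ tested against the increments and against $b$. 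Squaring the explicit $\tfrac1\Delta$ against the $O(\Delta)$ conditional second moment of the increments yields the coefficient $\widehat C/\Delta$, whereas the $b$-term contributes the $\Delta$-free coefficient $\overline C$, both depending only on $(\lambda_0,\cL_3,\cL_4)$.

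The main obstacle is precisely this last step. The quantity $\widehat U_{t_{k+1}}=\widehat u_{k+1}(\widehat Y_{t_{k+1}})$ is produced by the \emph{quantized} recursion, i.e. $\widehat Y_{t_{k+1}}$ is built from the already-quantized $\widehat Y_{t_k}$ rather than from $\overline Y_{t_k}$, so one must carefully re-route every discrepancy — the difference between $u_{k+1}$ evaluated at the genuine Euler point $\overline Y_{t_{k+1}}$ and at $\mathcal E_k(\widehat Y_{t_k},Z_{k+1})$, as well as the level-$(k{+}1)$ projection/distortion contributions — back into $\|\widetilde U_{t_{k+1}}-\widehat U_{t_{k+1}}\|_2^2$, which is legitimate only because the recursive marginal quantizers are (sub-optimal) stationary. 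Making these residual terms collapse cleanly, rather than leaking an extra $\|\overline Y_{t_i}-\widehat Y_{t_i}\|_2^2$ with $i\ge k+1$, is the delicate point of the argument.
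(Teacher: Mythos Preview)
Your proposal is correct and follows essentially the same route as the paper: the splitting along $\sigma(\widehat Y_{t_k})$, the identification $\widetilde V_{t_k}=\tfrac{1}{\sqrt\Delta}\Psi_k(\overline Y_{t_k})$ for term $(I)$, and the raw expansion of $(II)$ into an increment part (yielding $\widehat C/\Delta$) and a drift part (yielding $\overline C$), controlled via $\|\sigma^{-1}\|_F^2\le\lambda_0$ and conditional Cauchy--Schwarz. The only notable difference concerns the obstacle you flag in your last paragraph: rather than annihilating centroid-deviation terms via stationarity of $\Gamma_{k+1}$, the paper repeatedly invokes the best-$L^2$-approximation property of conditional expectation to replace each $\overline Y$-quantity by its $\widehat Y$-measurable counterpart at the cost of an inequality, reserving stationarity only for the final estimate $\|\widehat Y_{t_{k+1}}-\widehat Y_{t_k}\|_2^2\le\|\overline Y_{t_{k+1}}-\overline Y_{t_k}\|_2^2=O(\Delta)$.
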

\begin{proof}
We start by noticing that 
\begin{eqnarray*}
{\big|\big| \widetilde V_{t_k} - \widehat V_{t_k}  \big| \big|}_{2}^2 & = &  { \big| \big| \widetilde V_{t_k} - \mathbb E \left[ \widetilde V_{t_k} |  \widehat Y_{t_k} \right] + \mathbb E \left[ \widetilde V_{t_k} |  \widehat Y_{t_k} \right]   -  \widehat V_{t_k}   \big| \big|}_{2}^2 \\
& \le &  \underbrace{{  \big| \big| \widetilde V_{t_k} - \mathbb E \left[ \widetilde V_{t_k} |  \widehat Y_{t_k} \right] \  \big| \big| }_{2}^2 }_{(I)}+ \underbrace{ {  \big| \big|  \mathbb E \left[ \widetilde V_{t_k} |  \widehat Y_{t_k} \right]   -  \widehat V_{t_k}   \big| \big| }_{2}^2}_{(II)}.
\end{eqnarray*}
Let us focus on $(I)$. By exploting Equations \eqref{eq:EulerScheme}, \eqref{Euler_op} and Proposition \ref{U_V_u_v}, we find 
\begin{eqnarray*}
\widetilde{V}_{t_k} & = & \frac{1}{\Delta} \Excond{}{\widetilde{U}_{t_{k+1}}\left( W_{t_{k+1}} - W_{t_k}\right)}{\cF_{t_k}} \\
& = & \frac{1}{\Delta} \Excond{}{ u_{k+1}\left(  \mathcal { E } _ { k } (  \overline{Y} _ { t_{k} } , Z_{k+1} ) \right)  \sqrt{\Delta} Z_{k+1} }{\cF_{t_k}}
\end{eqnarray*}
where we used $\left( W_{t_{k+1}} - W_{t_k}\right) =: \sqrt{\Delta}  Z_{k+1} $, with $Z_{k+1} \sim \mathcal N(0, I_q)$ independent of $\cF_{t_k}$. 
So, we have, being $\sigma(\widehat Y_{t_k}) \subset \sigma(\overline Y_{t_k}) \subset \mathcal F_{t_k}$ and using the tower property of conditional expectation:
\begin{eqnarray*}
{  \big| \big| \widetilde V_{t_k} - \mathbb E \left[ \widetilde V_{t_k} |  \widehat Y_{t_k} \right] \  \big| \big| }_{2}^2 &=& 
\scriptstyle \bigg| \bigg|   \frac{1}{\Delta}  \Excond{}{ u_{k+1}\left(  \mathcal { E } _ { k } (  \overline{Y} _ { t_{k} } , Z_{k+1} ) \right)  \sqrt{\Delta} Z_{k+1}  }{\cF_{t_k}}  -   \frac{1}{\Delta}  \Excond{}{  u_{k+1}\left(  \mathcal { E } _ { k } (  \overline{Y} _ { t_{k} } , Z_{k+1} ) \right)  \sqrt{\Delta} Z_{k+1} }{ \widehat Y_{t_k}}   \bigg| \bigg|_{2}^2 \\
& \le &  \frac{1}{\Delta} \bigg| \bigg|   \Psi_k \left(  \overline{Y} _ { t_{k}}  \right)   -     \Psi_k \left(  \widehat{Y} _ { t_{k}}  \right)  \bigg| \bigg|_{2}^2  \le \frac{1}{\Delta} [\Psi_k]_{\textrm{Lip}}^2 {  \big| \big| \overline Y_{t_k} - \widehat Y_{t_k}  \big| \big| }_{2}^2
\end{eqnarray*}
where in the second passage we defined $\Psi_k (x) = \mathbb E [ u_{k+1}\left(  \mathcal { E } _ { k } (  x , Z_{k+1} ) \right)  Z_{k+1}]$, we used the definition of conditional expectation as best $L^2$-approximation and exploited the Lipschitzianity of $\Psi_k$, for which we refer to \cite[Prop. 3.4 (b)]{ps2018} (therein $\Psi_k$ corresponds to $z_k$).

Now, consider $(II)$: since $\sigma(\widehat Y_{t_k}) \subset \sigma(\overline Y_{t_k}) \subset \mathcal F_{t_k}$ and by recalling the definition of $\widetilde V_{t_k}$ and $\widehat V_{t_k}$ in Equations \eqref{our_scheme} and \eqref{our_scheme_quant} we have
\begin{eqnarray*}
{  \big| \big|  \mathbb E \left[ \widetilde V_{t_k} |  \widehat Y_{t_k} \right]   -  \widehat V_{t_k}   \big| \big| }_{2}^2 & = & {  \big| \big|  \mathbb E \left[ \widetilde V_{t_k} -  \widehat V_{t_k}  |  \widehat Y_{t_k} \right]     \big| \big| }_{2}^2 \\
& = &   \bigg| \bigg|  \mathbb E \Bigg[  \frac{1}{\Delta}  \left[ \sigma \left( \overline{Y} _ { t_k } \right)^{\top} \right]^{-1}  \widetilde{U}_{t_{k+1}}\left(\overline{Y}_{t_{k+1}}-\overline{Y}_{t_k}\right) -  \frac{1}{\Delta}  \left[ \sigma \left( \widehat{Y} _ { t_k } \right)^{\top} \right]^{-1}  \widehat{U}_{t_{k+1}} \left( \widehat{Y}_{t_{k+1}} - \widehat{Y}_{t_k}\right)   \\
& & + \left[ \sigma \left( \overline{Y} _ { t_k } \right)^{\top} \right]^{-1} \widetilde{U}_{t_{k+1}} b \left( \overline{Y} _ { t_{k} } \right)  -  \left[ \sigma \left( \widehat{Y} _ { t_k } \right)^{\top} \right]^{-1}  \widehat{U}_{t_{k+1}} b \left( \widehat{Y} _ { t_{k} } \right)  \bigg|  \widehat Y_{t_k} \Bigg]     \bigg| \bigg|_{2}^2 \\
& \le & \frac{2}{\Delta^2}   \underbrace{ \bigg| \bigg| \mathbb E \Bigg[ \left[ \sigma \left( \overline{Y} _ { t_k } \right)^{\top} \right]^{-1}  \widetilde{U}_{t_{k+1}}\left(\overline{Y}_{t_{k+1}}-\overline{Y}_{t_k}\right) -  \left[ \sigma \left( \widehat{Y} _ { t_k } \right)^{\top} \right]^{-1}  \widehat{U}_{t_{k+1}} \left( \widehat{Y}_{t_{k+1}} - \widehat{Y}_{t_k}\right) |  \widehat Y_{t_k} \Bigg]  \bigg| \bigg|_{2}^2}_{(IIa)} \\
& & + 2   \underbrace{ \bigg| \bigg| \mathbb E \Bigg[   \left[ \sigma \left( \overline{Y} _ { t_k } \right)^{\top} \right]^{-1} \widetilde{U}_{t_{k+1}} b \left( \overline{Y} _ { t_{k} } \right)  -  \left[ \sigma \left( \widehat{Y} _ { t_k } \right)^{\top} \right]^{-1}  \widehat{U}_{t_{k+1}} b \left( \widehat{Y} _ { t_{k} } \right)  \bigg|  \widehat Y_{t_k} \Bigg]    \bigg| \bigg|_{2}^2 }_{(IIb)} .
\end{eqnarray*}
We are hence led to focus now on $(IIa)$ and $(IIb)$. We start by $(IIb)$. By exploiting again the definition of conditional expectation with respect to $\widehat Y_{t_k}$ we find:
\begin{eqnarray*}
& \bigg| \bigg| \mathbb E \Bigg[   \left[ \sigma \left( \overline{Y} _ { t_k } \right)^{\top} \right]^{-1} \widetilde{U}_{t_{k+1}} b \left( \overline{Y} _ { t_{k} } \right)  -  \left[ \sigma \left( \widehat{Y} _ { t_k } \right)^{\top} \right]^{-1}  \widehat{U}_{t_{k+1}} b \left( \widehat{Y} _ { t_{k} } \right)  \bigg|  \widehat Y_{t_k} \Bigg]    \bigg| \bigg|_{2}^2  &\\
& \le  \bigg| \bigg|   \left[ \sigma \left( \widehat {Y} _ { t_k } \right)^{\top} \right]^{-1}  \mathbb E \left[ \widetilde{U}_{t_{k+1}}  \bigg|  \widehat Y_{t_k} \right]  b \left( \widehat{Y} _ { t_{k} } \right)  -   \left[ \sigma \left( \widehat{Y} _ { t_k } \right)^{\top} \right]^{-1}  \mathbb E \Bigg[ \widehat{U}_{t_{k+1}}  \bigg|  \widehat Y_{t_k}  \Bigg]   b \left( \widehat{Y} _ { t_{k} } \right)   \bigg| \bigg|_{2}^2 & \\
& \le  {|| (\sigma(\cdot)^{\top})^{-1} ||}_F^2  \ \bigg| \bigg|   \mathbb E \left[ \widetilde{U}_{t_{k+1}} -  \widehat{U}_{t_{k+1}}   \bigg|  \widehat Y_{t_k} \right]  b \left( \widehat{Y} _ { t_{k} } \right)   \bigg| \bigg|_{2}^2  \le  \lambda_0 \bigg| \bigg|   \mathbb E \left[ \widetilde{U}_{t_{k+1}} -  \widehat{U}_{t_{k+1}}   \bigg|  \widehat Y_{t_k} \right]  b \left( \widehat{Y} _ { t_{k} } \right)   \bigg| \bigg|_{2}^2 &
\end{eqnarray*}
where we have used the fact that ${|| A||}_2 \le {|| A||}_F$ for any  matrix $A$ and Remark \ref{rem:Frob} on the boundedness of the norm of $\sigma(\cdot)^{-1}$. Now notice that the boundedness of the derivative of $b$ implies its uniform continuity and so, since the quantizer $\widehat Y_{t_k}$ takes values on a compact set, $b(\widehat Y_{t_k})$ is also bounded. Namely, there exists $\bar c >0$, {\color{black}{only depending on $\cL_3$ and $\cL_4$,}} such that ${||  b \left( \widehat{Y} _ { t_{k} } \right)   ||}_2^2 \le \bar c ^2$ and we find
\begin{eqnarray*}
\lambda_0  \ \bigg| \bigg|   \mathbb E \left[ \widetilde{U}_{t_{k+1}} -  \widehat{U}_{t_{k+1}}   \bigg|  \widehat Y_{t_k} \right]  b \left( \widehat{Y} _ { t_{k} } \right)   \bigg| \bigg|_{2}^2 & \le & \lambda_0 \ \bar c^2  {  \bigg| \bigg|   \mathbb E \left[ \widetilde{U}_{t_{k+1}} -  \widehat{U}_{t_{k+1}}   \bigg|  \widehat Y_{t_k} \right]   \bigg| \bigg|}_{2}^2 \\
& = &  \lambda_0 \ \bar c^2  {  \bigg| \bigg|   \widetilde{U}_{t_{k+1}} -  \widehat{U}_{t_{k+1}}    \bigg| \bigg|}_{2}^2
\end{eqnarray*}
where the error ${  \big| \big|   \widetilde{U}_{t_{k+1}} -  \widehat{U}_{t_{k+1}}    \big| \big|}_{2}^2$ has already been studied in \cite{ps2018} and for this we refer to Equation \eqref{eq:err_Utilde_Uhat}.

We now move to the last term, $(IIa)$. Using again the definition of conditional expectation  with respect to $\widehat Y_{t_k}$ and the fact that the Frobenius norm of the matrix  $\sigma(\cdot)^{-1}$ is bounded (see Remark \ref{rem:Frob}), we have:
\begin{eqnarray*}
& \bigg| \bigg| \mathbb E \Bigg[ \left[ \sigma \left( \overline{Y} _ { t_k } \right)^{\top} \right]^{-1}  \widetilde{U}_{t_{k+1}}\left(\overline{Y}_{t_{k+1}}-\overline{Y}_{t_k}\right) -  \left[ \sigma \left( \widehat{Y} _ { t_k } \right)^{\top} \right]^{-1}  \widehat{U}_{t_{k+1}} \left( \widehat{Y}_{t_{k+1}} - \widehat{Y}_{t_k}\right) |  \widehat Y_{t_k} \Bigg]  \bigg| \bigg|_{2}^2  &\\
& \le   \bigg| \bigg|  \left[ \sigma \left( \widehat{Y} _ { t_k } \right)^{\top} \right]^{-1} \mathbb E \Bigg[  \widetilde{U}_{t_{k+1}}\left(\overline{Y}_{t_{k+1}}-\widehat{Y}_{t_k}\right) |  \widehat Y_{t_k} \Bigg]   -  \left[ \sigma \left( \widehat{Y} _ { t_k } \right)^{\top} \right]^{-1}  \mathbb E \Bigg[   \widehat{U}_{t_{k+1}} \left( \widehat{Y}_{t_{k+1}} - \widehat{Y}_{t_k}\right) |  \widehat Y_{t_k} \Bigg]  \bigg| \bigg|_{2}^2  &\\
& \le 
\lambda_0   \bigg| \bigg|  \mathbb E \left[  \mathbb E \left[   \widetilde{U}_{t_{k+1}}\left(\overline{Y}_{t_{k+1}}-\widehat{Y}_{t_k} \right)  |  \widehat Y_{t_{k+1}} \right] |  \widehat Y_{t_k} \right]   -  \mathbb E \left[   \widehat{U}_{t_{k+1}} \left( \widehat{Y}_{t_{k+1}} - \widehat{Y}_{t_k}\right) |  \widehat Y_{t_k} \right]  \bigg| \bigg|_{2}^2  &
\end{eqnarray*}
where in the last equality we used $\sigma(\widehat Y_{t_k}) \subseteq \sigma(\widehat Y_{t_{k+1}})$. Now, by definition of conditional expectation with respect to $\widehat Y_{t_{k+1}}$ we find:
\begin{eqnarray*}
& \lambda_0   \bigg| \bigg|  \mathbb E \left[  \mathbb E \left[   \widetilde{U}_{t_{k+1}}\left(\overline{Y}_{t_{k+1}}-\widehat{Y}_{t_k} \right]  |  \widehat Y_{t_{k+1}} \right) |  \widehat Y_{t_k} \right]   -  \mathbb E \left[   \widehat{U}_{t_{k+1}} \left( \widehat{Y}_{t_{k+1}} - \widehat{Y}_{t_k}\right) |  \widehat Y_{t_k} \right]  \bigg| \bigg|_{2}^2  &\\
& \le 
 \lambda_0  \  \bigg| \bigg|  \mathbb E \left[ \left( \widetilde{U}_{t_{k+1}} -  \widehat{U}_{t_{k+1}} \right) \left(  \widehat{Y}_{t_{k+1}} - \widehat{Y}_{t_k} \right) |  \widehat Y_{t_k} \right]  \bigg| \bigg|_{2}^2 
 \le \lambda_0 \bigg| \bigg|   \widetilde{U}_{t_{k+1}} -  \widehat{U}_{t_{k+1}}    \bigg| \bigg|_{2}^2   \cdot  \bigg| \bigg|    \widehat{Y}_{t_{k+1}} - \widehat{Y}_{t_k} \bigg| \bigg|_{2}^2 &
\end{eqnarray*}
where in the last passage we have used conditional Cauchy-Schwartz inequality. Hence, it appears again the error term ${  \big| \big|   \widetilde{U}_{t_{k+1}} -  \widehat{U}_{t_{k+1}}    \big| \big|}_{2}^2$, for which we refer to Equation \eqref{eq:err_Utilde_Uhat}. It remains, then, to deal with the error ${  \big| \big|   \widehat{Y}_{t_{k+1}} - \widehat{Y}_{t_k} \big| \big|}_{2}^2$.

We begin this last part of the proof by recalling that, for every $k=0,\dots,N$, $\widehat Y_{t_k}$ is the stationary quantizer relative to $\overline Y_{t_k}$ obtained via recursive marginal quantization as explained in Section \ref{sec:quantization}. Namely, $\mathbb E(\overline Y_{t_k}| \widehat Y_{t_k}) = \widehat Y_{t_k} $ and so, via conditionl Jensen's inequality and the tower property, in case when $g$ is convex we have $\mathbb E [g(\widehat Y_{t_k})] \le \mathbb E [g(\overline Y_{t_k})]$. So when $g$ is the square function, we find
\begin{eqnarray*}
{  \big| \big|   \widehat{Y}_{t_{k+1}} - \widehat{Y}_{t_k} \big| \big|}_{2}^2 &=& \mathbb E \left[ (  \widehat{Y}_{t_{k+1}} - \widehat{Y}_{t_k} )^2 \right] =   \mathbb E \left[ (  \widehat{Y}_{t_{k+1}})^2 + (\widehat{Y}_{t_k} )^2 - 2 \widehat{Y}_{t_{k}} \widehat{Y}_{t_{k+1}} \right] \\
& \le 
&  \mathbb E \left[ (  \overline{Y}_{t_{k+1}})^2 + (\overline{Y}_{t_k} )^2 - 2 \mathbb E \left[ \overline Y_{t_{k+1}} \mathbb E\left[\overline Y_{t_{k}}| \widehat Y_{t_ {k}}\right] \big | \widehat Y_{t_{k+1}} \right] \right]\\
& = & {  \big| \big|   \overline{Y}_{t_{k+1}} - \overline{Y}_{t_k} \big| \big|}_{2}^2 \le \frac{\tilde c}{n},
\end{eqnarray*}
for a positive $\tilde c$ only depending on $\cL_3$ and where we recalled the $L^2$-estimate associated to the increments in the Euler scheme. 
To conclude it suffices to collect all the terms. 
\end{proof}

\section{Numerical tests}\label{sec:test}

In this section, we present two numerical experiments where we implement our quantization-based BSDE solver ad we test it. The first experiment involves a linear BSDE where the solution for the value process and the control is known in closed-form. This first test allows us to compare our newly proposed numerical approximation for the control with the closed-form solution. The second example focuses on a non-linear BSDE, with unknown closed-form solution, and  we compare the initial value of the solution according to our algorithm against a reference value available in the literature. The implementation of the routines was performed by means of the Java programming language and it is available at \url{https://github.com/AlessandroGnoatto}. Numerical tests were performed on a laptop equipped with a 4 core 2.9 GHz Intel Core i7 processor with 16 GB of RAM.

\subsection{A linear BSDE: hedging in the Black-Scholes model}

We first consider a linear FSDE of the form:
\begin{align*}
d Y_{t}=r Y_{t} d t+\sigma Y_{t} d W_{t}, \ Y_{0}=y_{0}>0,
\end{align*}
where $r=0.04$, $\sigma=0.25$ and $y_{0}=100$. We associate to this forward process the BSDE
\begin{align*}
U_{t}=\xi+\int_{t}^{T} f\left(s, Y_{s}, U_{s}, V_{s}\right) d s-\int_{t}^{T} V_{s}^\top d W_{s}, \quad t \in[0, T],
\end{align*}
with $$\xi = \left(Y_T-K\right)^+,\quad f\left(t, y, u, v\right) = -rv,$$ for $K=100$ and $T=1$. This corresponds to the well-known Black-Scholes model for the evaluation of a European Call option. For this BSDE the solution is analytically known, namely the process $Y$ is given by a pathwise application of the Black-Scholes formula, whereas the control satisfies
\begin{align*}
V_t=\frac{\partial U}{\partial y}(t,Y_t)\sigma Y_{t} = \mathcal  N \left( d_1(t,Y_t) \right) \sigma Y_{t},
\end{align*}
where $\mathcal N$ is the cumulative distribution function of the standard Gaussian and 
\begin{align*}
d_1(t,y):=\frac{\log \frac{y}{K}+\left(r+\frac{\sigma^2}{2}\right)(T-t)}{\sigma\sqrt{T-t}}.
\end{align*}
This example provides a validation of our proposed methodology in a simple case where a closed form solution to the BSDE is known. The exact solution for $U$, given the specified data, is $U_0=11.8370$. We apply our proposed algorithm by using a quantization grid consisting of $50$ points, a time discretization with $20$ points and a uniform mesh. The approximate initial value for the price $U$ is $11.7548$. Since the novelty of our approach is given by the new scheme for the control, we show that the scheme produces a reliable approximation for the control by comparing our approximation with the exact known solution. The reader is referred to Figure \ref{fig:hedging}, where we compare the exact and the quantization-based approximation for $V$ over the quantization grid. We observe that the newly proposed scheme provides a very good approximation.

\begin{figure}
\centering
  \subfloat{\label{fig:T20}\includegraphics[scale=0.45]{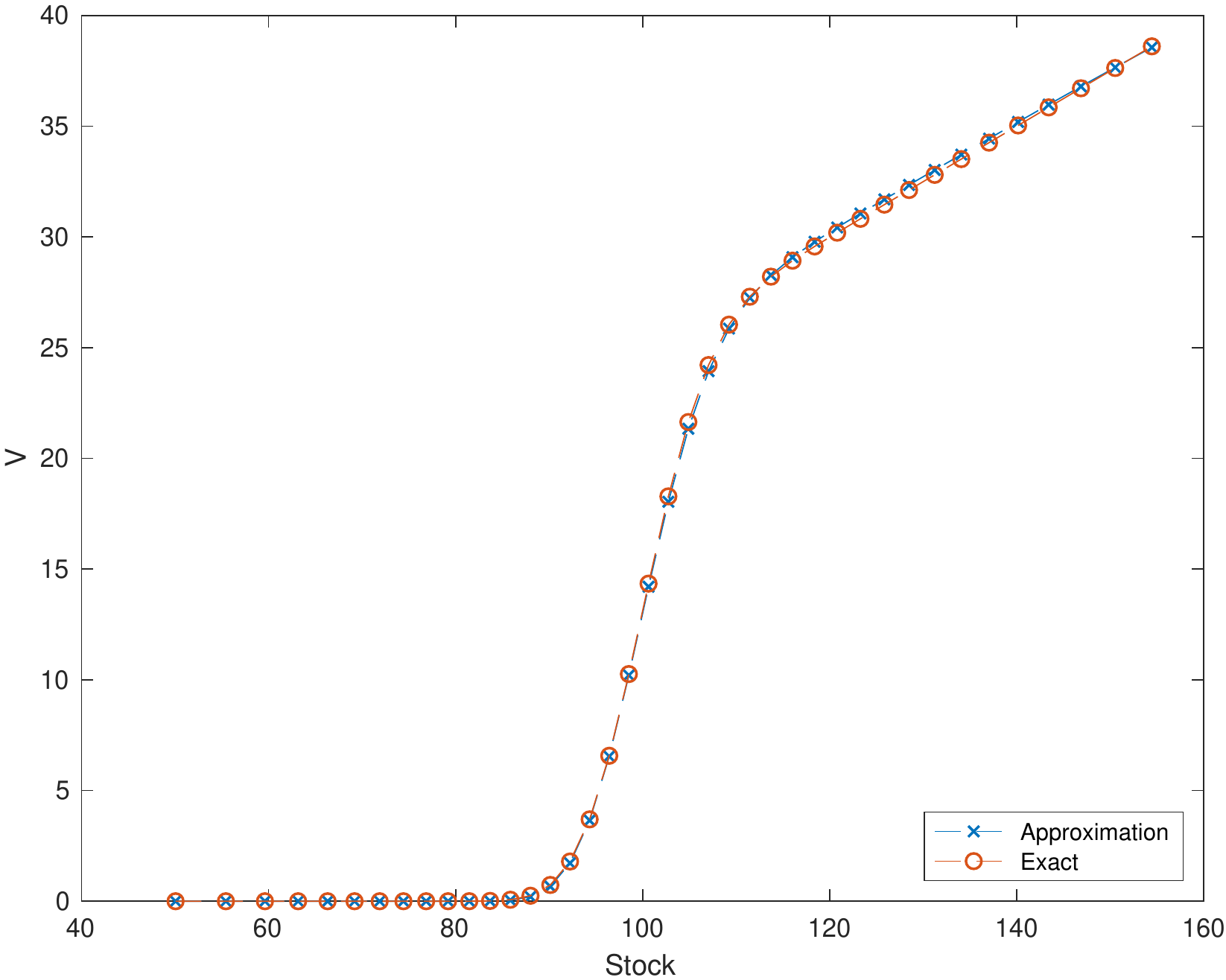}} \
    \subfloat{\label{fig:T15}\includegraphics[scale=0.45]{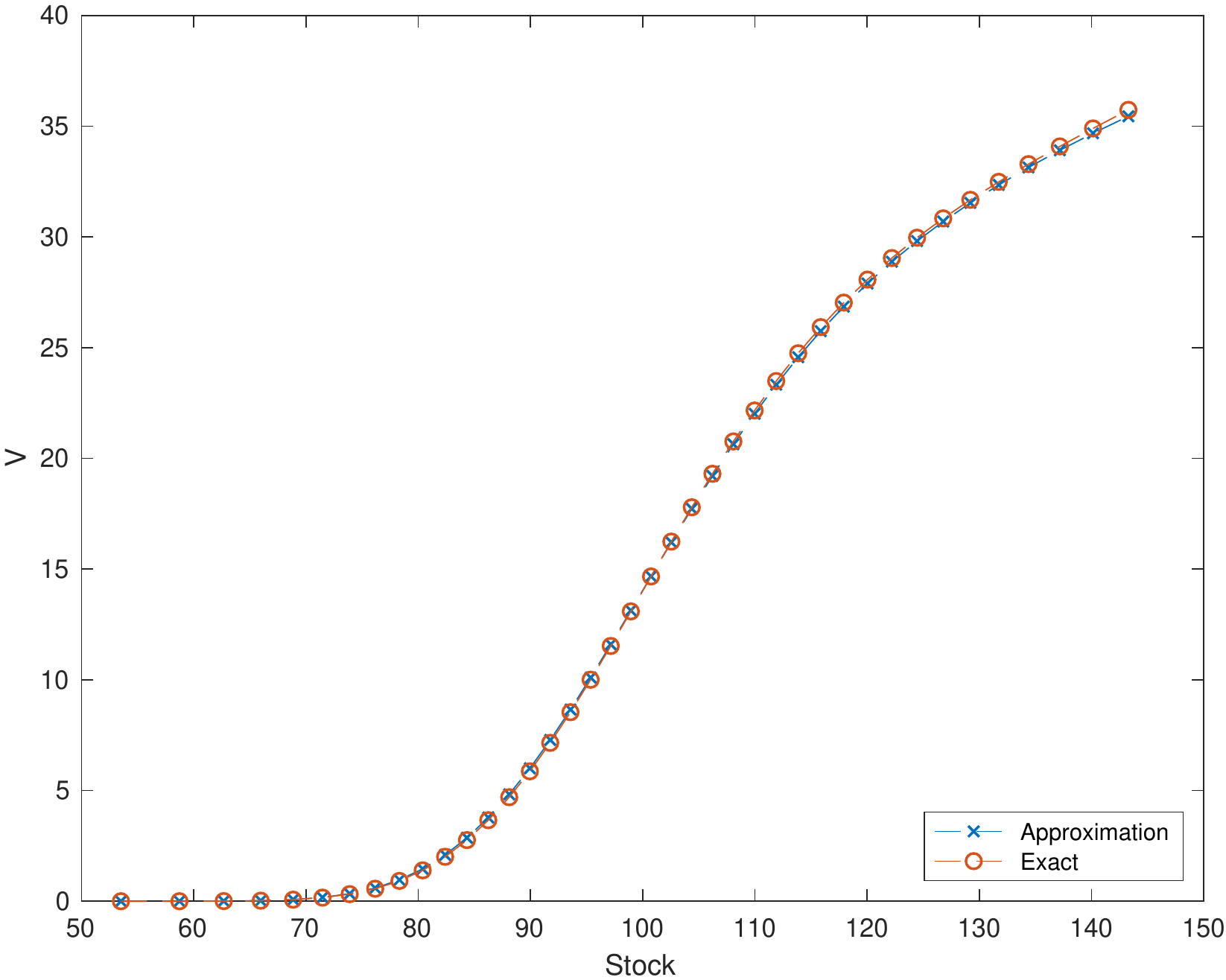}} \
     \subfloat{\label{fig:T10}\includegraphics[scale=0.45]{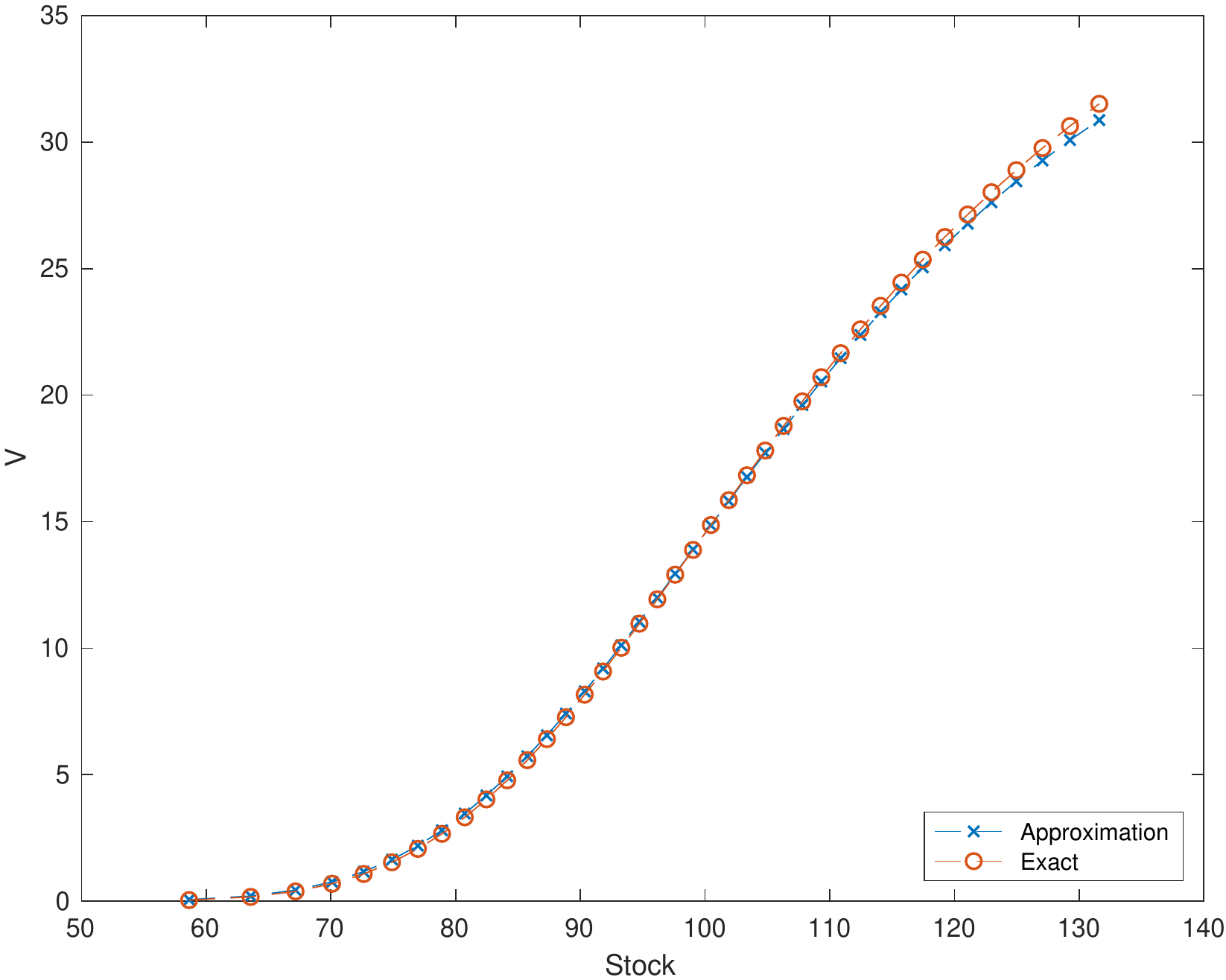}}\
      \subfloat{\label{fig:T5}\includegraphics[scale=0.45]{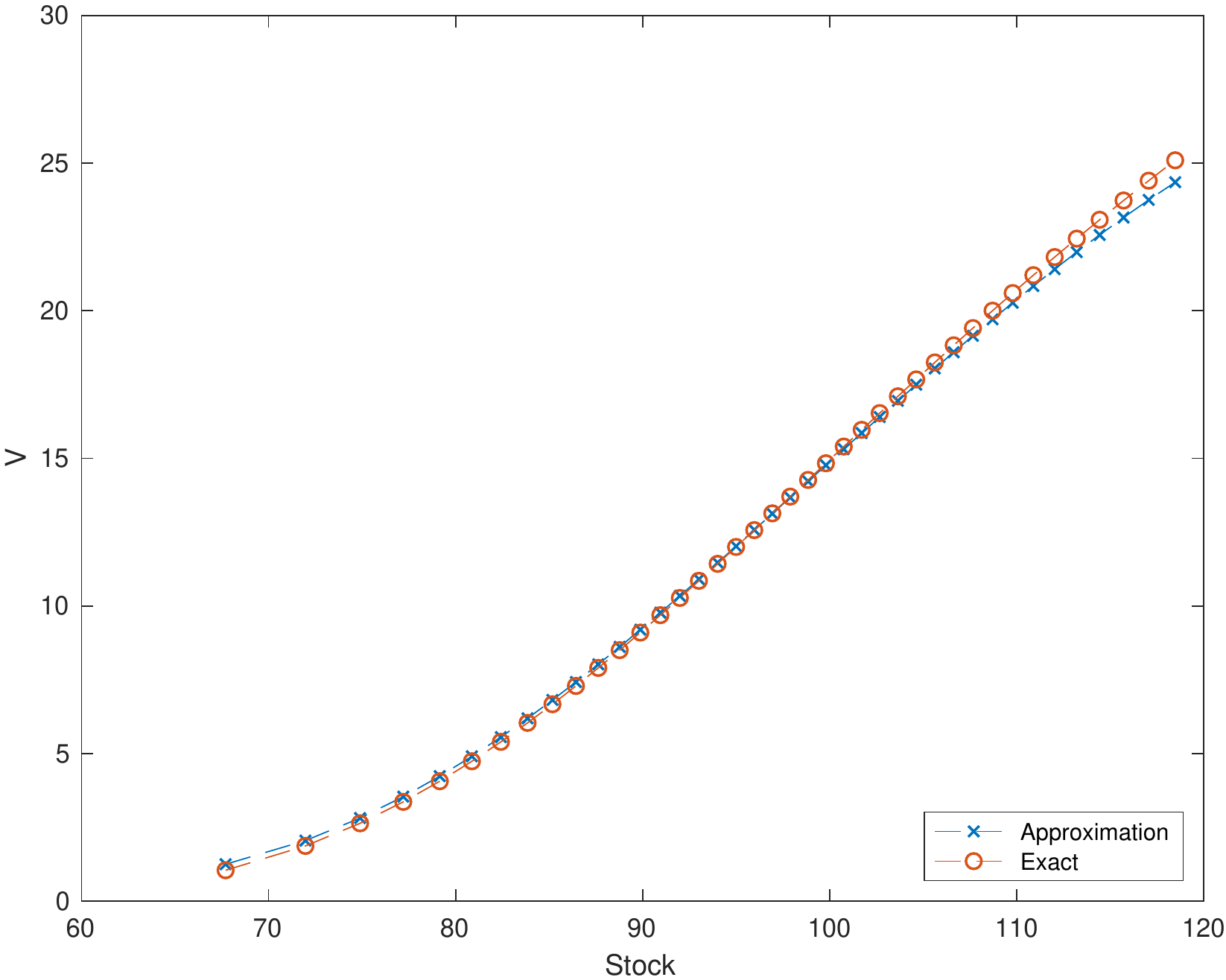}}  
\caption{Comparison between the exact and approximated hedging over the quantization grid at different time steps. Top left panel: $20th$ and terminal time step. Top right panel: $15th$ time step. Bottom left panel $10th$ time step. Bottom right panel $5th$ time step. \label{fig:hedging}}
\end{figure}

\subsection{A nonlinear BSDE: pricing with differential rates} We consider here a non-linear BSDE arising in financial mathematics in the context of option pricing, where we have the presence of two different interest rates, namely a borrowing and a lending rate, denoted respectively by $R$ and $r \le R$. Such setting corresponds to the model of \cite{Bergman95}. Our example is based on \cite{ps2018}.
We first consider a linear FSDE of the form:
\begin{align*}
d Y_{t}=\mu Y_{t} d t+\sigma Y_{t} d W_{t}, Y_{0}=y_{0}>0,
\end{align*}
where $\mu=0.05$, $\sigma=0.2$ and $y_{0}=100$. We associate to this forward process the BSDE
\begin{align*}
U_{t}=\xi+\int_{t}^{T} f\left(s, Y_{s}, U_{s}, V_{s}\right) d s-\int_{t}^{T} V_{s}^\top d W_{s}, \quad t \in[0, T],
\end{align*}
with
\begin{align*}
\xi &=\left(Y_{T}-K_{1}\right)^{+}-2\left(Y_{T}-K_{2}\right)^{+} ,\\
 f\left(t, y, u, v\right)& = -r u-\frac{\mu-r}{\sigma} v-(R-r) \min \left(u-\frac{v}{\sigma}, 0\right),
\end{align*}
where we set $K_1=95$, $K_2=105$ and $T=0.25$. This corresponds to a bull-Call spread with a long Call with strike $95$ and two short Call with strike $105$. We set the borrowing rate $R$ and the lending rate $r$ to: $R=0.06$ and $r=0.01$. There is no known analytical solution in this case. So, in line with what was done in Section 5.1 in \cite{ps2018} we benchmark our result against the reference value  $U_0=2.96$ (this was indeed taken by \cite[Section 4.2, Table 1]{BS2012}). We ran our algorithm by using $20$ quantization points and $50$ time discretization points and we obtained an estimate of the initial price of $2.9427$.  \textcolor{black}{However increasing the number of quantization points and time steps shows that the price is converging towards a lower value. For $100$ time steps and $100$ quantization points we obtain a value of $2.7782$, meaning that we can not confirm the value found in \cite{BS2012}.}

\begin{table}
\begin{tabular}{c|ccccc}
\hline
&5 &10 &20 &50 & 100\\
\hline
$5$&$2.8492$ & $3.1072$ & $3.4420$ & $3.9854$ & $4.4469$ \\
$10$&$2.9258$ & $2.9629$ & $3.0424$ & $3.2363$ & $3.5712$ \\
$15$&$2.8957$ & $2.8845$ & $2.9188$ & $3.0659$ & $3.2421$ \\
$20$&$2.8243$ & $2.8211$ & $2.8495$ & $2.9427$ & $3.0676$ \\
$50$&$2.8147$ & $2.7933$ & $2.7870$ & $2.7959$ & $2.8205$ \\
$100$&$2.8149$ & $2.7880$ & $2.7757$ & $2.7728$ & $2.7782$\\   
\hline
\end{tabular}
\caption{Option prices in the Bergman model. Each column corresponds to different numbers of time steps whereas each row corresponds to a different number of quantization points.\label{table:pricesBergman}}
\end{table}

\section{Conclusion}\label{sec:conclulsion}

We provided a useful modification  for the scheme of the control in \cite{ps2018} that allows to improve the algorithm for the approximation of the solution of a family of decoupled FBSDEs. Thanks to this simplification, we can apply a  fully based recursive marginal quantization approach that  does not involve  any  Monte Carlo simulation in any step of the procedure. 
We applied the scheme in some linear and non linear FBSDE examples and we found very good results even with a parsimonious number of quantization and time discretization points. This opens the door to more ambitious applications, like the computation of xVA on single and multiple positions, where   our fully quantization based method can be used as a pricing tool in the learning phase of any Neural Network based counterparty credit risk  algorithm, like the \textit{Deep xVA} approaches of \cite{gnoattoxva20} and \cite{albanesecrepey20} and \cite{crepeyxva20}.
\\

\bibliographystyle{apa}
\bibliography{biblio}
 
\end{document}